\newcommand{\R}{\mathbb R}
\newcommand{\eps}{\varepsilon}
\newtheorem{teo}{Theorem}
\newtheorem{lemma}[teo]{Lemma}
\newtheorem{prop}[teo]{Proposition}
\theoremstyle{definition}
\theoremstyle{remark}
\newtheorem{rem}[teo]{Remark}
\newcommand{\D}{{\cal D}^{1,2}}
\newcommand{\Ne}{{\cal N}}
\newcommand{\V}{{\cal V}}
\begin{document}

\title{Solutions for a Nonhomogeneous Nonlinear Schroedinger Equation
with Double Power Nonlinearity}
\author{M.Ghimenti
\thanks{Dipartimento di Matematica Applicata, Universit\`a di Pisa, via Buonarroti 1c,
56127, Pisa, Italy},
A.M.Micheletti
\thanks{Dipartimento di Matematica Applicata, Universit\`a di Pisa, via Buonarroti 1c,
56127, Pisa, Italy}}
\date{}
\maketitle

\begin{abstract}
\noindent We consider the problem $- \Delta u+V(x)u=f^{\prime}(u)+g(x)$ in
$\R^N$, under the assumption $\lim_{x\rightarrow\infty}V(x)=0$, and with
the non linear term $f$ with a {\em double power} behavior. We prove the
existence two solutions when $g$ is sufficiently small and $V<0$.

\noindent\textbf{Keywords:} Nonlinear Equations, Variational Methods, Orlicz
Spaces
\end{abstract}

\section{Perturbation of NSE}

We consider the existence of solutions of the following nonhomogeneous problem

\begin{equation}
\left\{
\begin{array}{ll}
\tag{$\mathscr P$}-\Delta u+V(x)u=f^{\prime }(u)+g(x), & x\in \R^{N}; \\
E^{V}_g(u)<\infty . &
\end{array}
\right.  \label{PV}
\end{equation}

where the energy functional is defined by
\begin{equation*}
E_g(u)=E^{V}_g(u)=\frac{1}{2}\int\limits_{\R^{N}}|\nabla u|^{2}+
V(x)u^{2}(x)dx-
\int\limits_{\R^{N}}f(u)dx-\int\limits_{\R^{N}}g(x)u(x)dx.
\end{equation*}

The nonlinearity is given by a function $f$ of {\em double power} type that is
an even function $f\in C^{3}(\R,\R)$ with
$f(0)=f'(0)=f''(0)=0$ satisfying the
following requirements:

\begin{enumerate}
\item there exist positive numbers $c_{0},c_{2},p,q$ with $2<p<2^{\ast }<q$
such that

\begin{equation}
\tag{$f_0$}
\left\{
\begin{array}{ll}
c_{0}|s|^{p}\leq f(s) & \text{ for }|s|\geq 1; \\
c_{0}|s|^{q}<f(s) & \text{ for }|s|\leq 1;
\end{array}
\right.  \label{f1}
\end{equation}
\begin{equation}
\tag{$f_2$}
\left\{
\begin{array}{ll}
|f^{\prime \prime }(s)|\leq c_{2}|s|^{p-2} & \text{ for }|s|\geq 1; \\
|f^{\prime \prime }(s)|\leq c_{2}|s|^{q-2} & \text{ for }|s|\leq 1;
\end{array}
\right.  \label{f2}
\end{equation}

\item there exists $\mu _{1}>2$ and $\mu _{2}>1$ such that, for all $s\neq 0$
\begin{equation}
\tag{$f_\mu$}
\begin{array}{ccccc}
0<\mu _{1}f(s)\leq f^{\prime }(s)s ,\label{fmu}
&&
\mu _{2}f^{\prime }(s)s<f^{\prime \prime }(s)s^{2},
 &&
f^{\prime \prime \prime }(s)s^{3}>0;
\end{array}
\end{equation}

\item for any $u\in\D$ we have
\begin{equation} \tag{$f_3$}\label{f3}
f'''(u)u^3\in L^1.
\end{equation}
\end{enumerate}
For example the required assumptions are satisfied by $f(s)=\frac{|s|^q}{1+|s|^{q-p}}$ with $q-p$
small enough, as shown in the appendix.

We assume $V\in L^{N/2}(\R^N)\cap L^t$, for some $t>N/2$ and
\begin{equation}\label{condV}
||V||_{L^{N/2}}<S:=\inf_{u\in{\D}}\frac{\displaystyle\int_{\R^N}|\nabla u|^2}
{\displaystyle\left(\int_{\R^N}|u|^{2^*}\right)^{2/2^*}}.
\end{equation}
Moreover, we want $V\leq0$ and $V<0$ on a set of positive measure.

In \cite{Zhu91} the existence of two positive solutions $u_1,u_2\in H^1(\R^N)$ of the equation
$-\Delta u+u=|u|^{p-2}u+g$ is proved when $g\in L^2$ satisfies
$0\leq g\leq C\exp(-(1+\varepsilon)|x|)$, $g\not\equiv 0$.

Recently, in \cite{Zho00}, a similar problem for the $p$-laplacian is studied. Namely,
the author proves, with variational techniques, that the problem
$-\Delta_p u+c|u|^{p-2}u=|u|^{p^*-2}u+f(x,u)+h(x)$ in $\R^N$,
where $2\leq p<N$, $c>0$,
$h\in W^{-1,p^\prime}(\R^N)$ and $f$ a is continuous superlinear function such that
$f(x,0)=0$ and $f(x,u)=o(|u|^{p^*-1})$ as $|u|\to\infty$,
admits two positive solutions $u_1,u_2\in H^1(\R^N)$.

The existence of a positive solution of the problem $-\Delta u+u=|u|^{p-1}u+g$ on $\R^N$, $u(x)\rightarrow 0$ for
$|x|\rightarrow \infty$, was proven in \cite{Be96} when $p>\frac{N}{N-2}$ and $g\in C^{0,\alpha}(\R^N)$, $g\geq0$,
$g\not\equiv0$  and $g(x)\leq \frac{C}{(1+|x|^2)^{p/p-1}}$ for some $C>0$. In \cite{BN01} there is a
result of multiplicity for this problem.

In \cite{Ta92} the author shows that the Dirichlet problem on a bounded domain $\Omega\subset \R^N$ in the
critical case
$
-\Delta u=|u|^{2^*-2}u+g
$
has two solutions $u_0,u_1\in H_0^1(\Omega)$, for $g$ satisfying a suitable condition, and if $g\geq 0$ then
$u_0\geq0$ and $u_1\geq0$.

We are interested in studying the problem with double power nonlinearity.

In pioneering work Berestycki and Lions \cite{BL83a,BL83b}
showed the existence of a
positive solution in the case $V\equiv0$ when $f''(0)=0$, $f$ has a
supercritical growth near the origin and subcritical at infinity.

More recently in the papers \cite{BR05,BGM04,BM04,Pis} the double-power
growth condition has been used to obtain the existence of positive solutions for
different problems of the tipe (\ref{PV}).
In particular, in \cite{BGM04}, the authors proved
that in the same hypothesis on $V$ the homogeneous problem
\begin{equation}\label{ph}
-\Delta u+Vu=f'(u)
\end{equation}
has a ground state solution (i.e. least energy nontrivial solution).
Other results on similar problems with the double power nonlinearity can be found in
\cite{AP,BR05,GM06}.

In this paper we prove the following theorem
\begin{teo}
If $g\in L^{\frac{2N}{N+2}}\cap L^{s}$, for some $s>\frac{2N}{N+2}$,
and if $||g||_{\frac{2N}{N+2}}$ is sufficiently small there exist two solutions
of problem (\ref{PV}) in $\D$. The first solution is close to 0; if
also $||g||_{L^{p'}\cap L^{q'}}$is small enough, the critical value of the second solution is
close to the least energy level $m_V$ of the homogeneous problem (\ref{ph}).

Furthermore, if $g\geq0$ the two solutions are non negative.
\end{teo}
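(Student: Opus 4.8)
The plan is to obtain the two solutions as critical points of $E_g$ on $\D$; throughout, write $\|g\|:=\|g\|_{L^{2N/(N+2)}}=\|g\|_{L^{(2^*)'}}$.

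\emph{Functional setting and the compactness input.} First I would check that $E_g\in C^2(\D,\R)$: splitting $\{|u|\le 1\}$ and $\{|u|\ge 1\}$ and integrating $(f_2)$ twice gives, since $2<p<2^*<q$, the pointwise bounds $f(u)\lesssim|u|^{2^*}$, $|f'(u)|\lesssim|u|^{2^*-1}$, $|f''(u)|\lesssim|u|^{2^*-2}$, so the integrals are finite on $\D\hookrightarrow L^{2^*}$ and differentiation under the integral sign is legitimate, while $u\mapsto\int_{\R^N}gu$ is a bounded linear functional since $\tfrac{2N}{N+2}=(2^*)'$. As $V\le 0$ and $\|V\|_{L^{N/2}}<S$, the form $\|u\|_V^2:=\int_{\R^N}|\nabla u|^2+Vu^2$ is a norm equivalent to the Dirichlet one, with $\bigl(1-\|V\|_{N/2}/S\bigr)\|u\|_\D^2\le\|u\|_V^2\le\|u\|_\D^2$. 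The key ingredient is a splitting lemma of Struwe--Benci--Cerami type: any bounded Palais--Smale sequence for $E_g$ decomposes, along a subsequence, as $u_n=u+\sum_{j=1}^k w^j(\cdot-y_n^j)+o(1)$ in $\D$, where $E_g'(u)=0$, $|y_n^j|\to\infty$, each $w^j\neq 0$ solves the \emph{autonomous} problem $-\Delta w=f'(w)$ (the terms $V(\cdot-y_n^j)$ and $g(\cdot-y_n^j)$ disappear in the limit, since $V(x)\to0$ and $g\in L^{(2^*)'}$), and $E_g(u_n)\to E_g(u)+\sum_j E_\infty(w^j)$ with $E_\infty(w)=\tfrac12\int|\nabla w|^2-\int f(w)$. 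By Berestycki--Lions \cite{BL83a,BL83b} the autonomous problem has a least energy $m_\infty>0$, so $E_\infty(w^j)\ge m_\infty$ for each $j$ --- and the double--power shape of $f$ is precisely what prevents the concentration of a critical--exponent bubble, so escaping translates are the only loss of compactness. Finally I record the strict inequality $m_V<m_\infty$ (exactly what makes the ground state of \cite{BGM04} attained): placing a rescaled copy of the autonomous ground state near a point where $V<0$ makes $\int V|w(\cdot-y)|^2<0$, so its fibering map for $E_0^V$ peaks strictly below $\max_t E_\infty(tw)=m_\infty$.

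\emph{First solution: a local minimum near $0$.} From $f(u)\lesssim|u|^{2^*}$ one gets $E_g(u)\ge\varphi(\|u\|_\D)$ with
\[
\varphi(\rho)=\tfrac12 a\rho^2-c\rho^{2^*}-\|g\|\rho,\qquad a=1-\tfrac{\|V\|_{N/2}}{S}>0.
\]
For $\|g\|$ small, $\varphi$ is negative just to the right of $0$, attains a strictly positive local maximum at some $\rho_1$ with $c_1:=\inf_{\|g\|\le 1}\varphi(\rho_1)>0$, then decreases, while $\min_{[0,\rho_1]}\varphi=O(\|g\|^2)$. Since $g\not\equiv 0$, choosing $\bar u$ with $\int g\bar u>0$ gives $E_g(t\bar u)<0$ for small $t>0$, so $m_1:=\inf_{\|u\|_\D\le\rho_1}E_g<0=E_g(0)$, whereas $E_g\ge\varphi(\rho_1)>0$ on $\{\|u\|_\D=\rho_1\}$. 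Ekeland's variational principle on the closed ball yields a (bounded) Palais--Smale sequence at level $m_1$, and in the splitting lemma no bubble can occur --- a bubble would give $m_1\ge E_g(u)+m_\infty\ge\min_{[0,\rho_1]}\varphi+m_\infty>0$ once $\|g\|$ is small, a contradiction. Hence $u_n\to u_1$ strongly, $E_g(u_1)=m_1<0$, so $u_1\neq 0$ solves (\ref{PV}); moreover, from $E_g(u)\ge(\tfrac12-\tfrac1{\mu_1})\|u\|_V^2-C\|g\|\|u\|_V$ (valid at every critical point by $(f_\mu)$) and $E_g(u_1)<0$ one gets $\|u_1\|_\D=O(\|g\|)$, so the first solution is close to $0$.

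\emph{Second solution: a mountain pass near $m_V$.} By $(f_\mu)$, $f(s)\ge c_0|s|^p$ for $|s|\ge 1$ with $p>2$, hence $E_g(tw_V)\to-\infty$ along the homogeneous ground state $w_V$ of \cite{BGM04}; fix $T$ (uniformly for $\|g\|\le 1$) with $E_g(Tw_V)<0$ and $\|Tw_V\|_\D>\rho_1$, and let $\Gamma=\{\gamma\in C([0,1],\D):\gamma(0)=0,\ \gamma(1)=Tw_V\}$. Every $\gamma$ meets $\{\|u\|_\D=\rho_1\}$, so $c_{MP}:=\inf_\Gamma\max_{[0,1]}E_g\circ\gamma\ge c_1>0$ is a genuine mountain pass level, and the mountain pass theorem provides a Palais--Smale sequence there, bounded by $(f_\mu)$. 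Using the competitor $t\mapsto tw_V$ and the fact that $(f_\mu)$ forces $t=1$ to be the maximum of $t\mapsto E_0^V(tw_V)$, of value $m_V$, one gets
\[
c_{MP}\ \le\ m_V+T\Bigl|\int_{\R^N}g\,w_V\Bigr|\ \le\ m_V+O\bigl(\|g\|_{L^{p'}\cap L^{q'}}\bigr),
\]
the last step by Hölder, since $w_V$, being a solution of the double--power equation, lies in the associated Orlicz space, hence in $L^p\cap L^q$. Now, by $(f_\mu)$, every critical point $u$ of $E_g$ satisfies $E_g(u)\ge -C\|g\|^2$; since $m_V<m_\infty$, for $\|g\|$ small this forces $c_{MP}<m_\infty$ and $c_{MP}<E_g(u)+m_\infty$ for every critical point $u$. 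Hence in the splitting lemma at level $c_{MP}$ no bubble occurs: the Palais--Smale sequence converges strongly to a critical point $u_2$ with $E_g(u_2)=c_{MP}\ge c_1>0$, so $u_2\neq 0$ and $u_2\neq u_1$ (their energies have opposite signs). For the assertion $E_g(u_2)\to m_V$, the matching lower bound is obtained by contradiction: if $c_{MP}\le m_V-\delta$ along a sequence $\|g_n\|\to 0$, then the corresponding solutions $u_2^{(n)}$ form a Palais--Smale sequence for $E_0^V$ at a level in $[c_1,m_V-\delta]\subset(0,m_\infty)$, hence, by the splitting lemma and the fact that every critical point of $E_0^V$ has nonnegative energy (by $(f_\mu)$), converge strongly to a nontrivial critical point of $E_0^V$ of energy $<m_V$, contradicting the minimality of $m_V$.

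\emph{Sign of the solutions, and the main obstacle.} If $g\ge 0$ then $E_g(|u|)\le E_g(u)$ (as $f$ is even and $\int g|u|\ge\int gu$), so the ball minimizer may be taken nonnegative and $u_1\ge 0$; for $u_2$ one reruns the argument with $f'(u)$ replaced by $f'(u^+)$, whose solutions are $\ge 0$ by testing with $u^-$ and still solve (\ref{PV}) (using $g\in L^s$, $s>\tfrac{2N}{N+2}$, for regularity and the strong maximum principle). The main obstacle throughout is the compactness analysis --- proving the splitting lemma with autonomous limit profiles and then verifying, via the energy identities above and the strict gap $m_V<m_\infty$, that for $\|g\|$ small neither $m_1$ nor $c_{MP}\approx m_V$ falls on the set $\{E_g(u)+km_\infty:\ E_g'(u)=0,\ k\ge 1\}$ where Palais--Smale compactness is lost.
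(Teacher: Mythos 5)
Your argument is essentially correct, but it follows a genuinely different route from the paper. The paper never uses a ball-constrained minimum or a mountain pass: it studies the fibering maps $\varphi_g^u(t)=E_g^V(tu)$ in detail (Proposition \ref{gsmall}), decomposes the Nehari manifold as $\Ne_g^+\cup\Ne_g^-$, and obtains the first solution as the global minimizer of $E_g^V$ on $\Ne_g^+$ (Theorem \ref{mainNeg+}, which also yields uniqueness for small $g$, something your construction does not give) and the second as the minimizer on $\Ne_g^-$ (Lemma \ref{limsup}, Theorem \ref{u1g}, Proposition \ref{mv}); you instead take a local minimum in a small ball via Ekeland plus a mountain pass between $0$ and $Tw_V$, and prove the level convergence $c_{MP}\to m_V$ by a contradiction/compactness argument rather than by the paper's direct estimates on the fibering parameters $t_n,\tau_n$. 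Both routes hinge on exactly the same two external inputs, the splitting lemma (which you take as a black box of Struwe type, while the paper proves it in Section 3 --- this is the main technical content you are assuming) and the results of \cite{BGM04} giving $\bar u$ and $0<m_V<m_0$. Two caveats on your version: (i) as written, your upper bound $c_{MP}\le m_V+O(\|g\|_{L^{p'}\cap L^{q'}})$ makes the \emph{existence} of the second solution depend on smallness of the $L^{p'}\cap L^{q'}$ norm, whereas the theorem (and the paper, via Proposition \ref{gsmall} and Lemma \ref{limsup}) needs only $\|g\|_{L^{2N/(N+2)}}$ small; this is trivially repaired by estimating $\int g\,w_V\le\|g\|_{L^{2N/(N+2)}}\|w_V\|_{L^{2^*}}$, reserving the $L^{p'}\cap L^{q'}$ smallness for the statement $c_{MP}\to m_V$ only; (ii) your treatment of the sign of the second solution by truncating to $f'(u^+)$ is only sketched --- it requires rechecking that the truncated nonlinearity still supports the mountain-pass geometry, the boundedness of PS sequences and the splitting lemma with autonomous bubbles of energy $\ge m_0$ (routine, since the growth bounds and the inequality $\mu_1\tilde f(s)\le\tilde f'(s)s$ survive truncation, and nonnegative autonomous limit profiles solve the original equation), whereas the paper avoids this entirely by exploiting the $\Ne_g^-$ minimization characterization: $E_g^V(\bar t|u_{1,g}|)\le E_g^V(u_{1,g})=m_{1,g}$ with $\bar t|u_{1,g}|\in\Ne_g^-$ (Theorem \ref{u1g}, and Proposition \ref{upos} for the first solution, which matches your $|u_1|$ argument). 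What your approach buys is independence from the finer Nehari geometry (no need for the manifold $\V$, Lemma \ref{L}, or the $C^1$ dependence of $t_u^1,t_u^2$ on $g$); what the paper's approach buys is uniqueness of the small solution and a sharper, more quantitative description of where the two critical points sit.
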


\begin{rem}
Indeed the hypothesis on the sign of $V$ is used only to find the second solution, but we prefer a more compact
claim for the theorem. Anyway, in the proofs we focus out when we use any hypothesis.
\end{rem}
To get the solutions of (\ref{PV}) we look for critical points of the functional $E_g^V$ constrained on the
Nehari manifold
\begin{eqnarray*}
\Ne_g^V&=&\Ne_g=\left\{u\in\D\::\ \langle\nabla E_g(u),u\rangle=0, u\neq 0\right\}=\\
&=&\left\{u\in\D\smallsetminus0:\int_{\R^N}\!|\nabla u|^2+\int_{\R^N}\! Vu^2-
\int_{\R^N}\! f'(u)u-\int_{\R^N}\! gu=0\right\}.
\end{eqnarray*}
The study of the structure of the Nehari manifold will be a fundamental part of this paper.

This paper is organized as follows:
in section 2, we recall some technical results concerning the appropriate function space required
by the growth properties of the nonlinearity $f$ . Moreover, we study the geometry and the
properties of the Nehari manifold. In section 3,
we prove a Splitting Lemma necessary to overcome the lack of compactness. This lemma is a variant
for a well known result of \cite{St84}. In section 4 we prove the existence of two distinct critical
points of the functional on the
Nehari manifold.

\section{Notations and preliminary result}

We will use the following notations

\begin{itemize}
\item {$\D=\D(\R^N)=$ completion of $C_0^\infty(\R^N)$ with respect to the norm
$$||u||=\left(\int_{\R^N}|\nabla u|^2\right)^{1/2};$$}
\item $\displaystyle||u||^2_V=\int_{\R^N} |\nabla u|^2+ \int_{\R^N} Vu^2$;
notice that, by (\ref{condV}),
we have that $||u||_V$
is a norm in $\D$ equivalent to the usual one;
\item $2^*=\frac{2N}{N-2}$;
\item $m_g=\inf\limits_{u\in \Ne_g^V}E_g^V(u)$;
\item $m_{1,g}=\inf\limits_{u\in \Ne_g^-}E_g^V(u)$;
\item $m_0=\inf\limits_{u\in \Ne_0^0}E_0^0(u)$; we call $\omega$
the minimizer of $E_0^0$
on $\Ne_0^0$ radially symmetric;
\item $m_V=\inf\limits_{u\in \Ne_0^V}E_0^V(u)$;
we call $\bar u$ the minimizer of $E_0^V$ on $\Ne_0^V$;
\item $\Gamma_u=\{x\in\R^N\ :\ |u(x)|>1\}$;
\item $|A|=$ the Lebesgue measure of the subset $A\subset \R^N$;
\item $B_R=\{x\in\R^N\ :\ |x|\leq R\}$;
\item $B_R^C=\R^N\smallsetminus B_R$;
\item $u_y(x)=u(x+y)$.
\end{itemize}

In order to study the properties of the functional $E_g^V$ and its Nehari manifold,
we consider some suitable Orlicz space $L^p+L^q$,
where $2<p<2^*<q$, related to the double power growth behavior of the function $f$.
We recall some properties of these spaces to get the smoothness of the functional
$E_g^V$

Given $p\neq q$, we consider the space $L^p+L^q$ made up of the
functions $v:\R^N\rightarrow\R$ such that
\begin{equation}
v=v_1+v_2\ \text{ with }\ v_1\in L^p, v_2\in L^q.
\end{equation}
The space $L^p+L^q$ is a Banach space equipped with the norm:
\begin{equation}
||v||_{L^p+L^q}=\inf\{\ ||v_1||_{L^p}+||v_2||_{L^q}\ :\ v_1\in L^p, v_2\in L^q,\
v_1+v_2=v\}.
\end{equation}
It is well known (see, for example \cite{BL76}) that $L^p+L^q$
coincides with the dual of $L^{p'}\cap L^{q'}$. Then:
\begin{equation}
L^p+L^q=\left(L^{p'}\cap L^{q'} \right)'\ \text{ with }\ p'=\frac p{p-1},\ q'=\frac q{q-1},
\end{equation}
and we can introduce the following norm equivalent to the previous one
\begin{equation}
||v||_{L^p+L^q}=\inf_{\varphi\neq0}\frac{\int v\varphi}{||\varphi||_{L^{p'}}+||\varphi||_{L^{q'}}}.
\end{equation}
Hereafter we recall some results useful for this paper contained in
\cite{BF04,BM04}.

\begin{lemma}\label{lp+lq}
We have
\begin{enumerate}
\item if $v\in L^p+L^q$, the following inequalities hold:
\begin{eqnarray*}
&&\max\left[||v||_{L^q(\R^N\smallsetminus\Gamma_v)}-1,
\frac{1}{1+|\Gamma_v|^\frac1\tau}||v||_{L^p(\Gamma_v)}\right]\leq\\
&\leq&||v||_{L^p+L^q}\leq\\
&\leq&\max[||v||_{L^q(\R^N\smallsetminus\Gamma_v)},||v||_{L^p(\Gamma_v)}]
\end{eqnarray*}
when $\tau=\frac{pq}{q-p}$;
\item let $\{v_n\}\subset L^p+L^q$. Then $\{v_n\}$ is bounded in $L^p+L^q$ if and only
if the sequences $\{|\Gamma_{v_n}|\}$ and
$\{||v||_{L^q(\R^N\smallsetminus\Gamma_{v_n})}+||v||_{L^p(\Gamma_{v_n})}\}$
are bounded.
\item $f'$ is a bounded map from $L^p+L^q$ into $L^\frac{p}{p-1}\cap L^\frac{q}{q-1}$
\end{enumerate}
\end{lemma}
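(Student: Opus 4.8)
The plan is to handle the three items in turn. All of them are classical facts about the sum space $L^p+L^q$ (with $1<p<q<\infty$) borrowed from \cite{BF04,BM04}; the only genuinely delicate point is the two-sided norm estimate in item 1, and once that is available items 2 and 3 follow from it by elementary arguments combined with the growth hypothesis $(f_2)$ on $f$.

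For item 1 I would start from the observation that every $v\in L^p+L^q$ has $|\Gamma_v|<\infty$: writing $v=w_1+w_2$ with $w_1\in L^p$, $w_2\in L^q$, on $\Gamma_v=\{|v|>1\}$ at least one of $|w_1|>1/2$, $|w_2|>1/2$ holds, so Chebyshev gives $|\Gamma_v|\le 2^p\|w_1\|_{L^p}^p+2^q\|w_2\|_{L^q}^q<\infty$. Next I would check that the canonical splitting $v=v\chi_{\Gamma_v}+v\chi_{\R^N\setminus\Gamma_v}$ is admissible: $v\chi_{\Gamma_v}\in L^p$ because $|\Gamma_v|<\infty$ and $p<q$ give $L^q(\Gamma_v)\hookrightarrow L^p(\Gamma_v)$, while $v\chi_{\R^N\setminus\Gamma_v}\in L^q$ follows by splitting $\R^N\setminus\Gamma_v$ into the part where $|v|\le 2|w_2|$ (comparable to $|w_2|\in L^q$) and the part where $|v|<2|w_1|$, on which $|v|\le1$ forces $|v|^q\le|v|^p\le 2^p|w_1|^p$. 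This admissible decomposition produces the upper estimate. For the lower estimate I would take an arbitrary decomposition $v=v_1+v_2$, use $|v|\le1$ on $\R^N\setminus\Gamma_v$ to compare its $L^q$- and $L^p$-norms, and on $\Gamma_v$ apply Hölder with conjugate exponents $q/p$ and $\tau/p=q/(q-p)$ to get $\|v_2\|_{L^p(\Gamma_v)}\le|\Gamma_v|^{1/\tau}\|v_2\|_{L^q}$, whence $\|v\|_{L^p(\Gamma_v)}\le(1+|\Gamma_v|^{1/\tau})(\|v_1\|_{L^p}+\|v_2\|_{L^q})$; taking the infimum over decompositions yields the stated bound.

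Items 2 and 3 then reduce to bookkeeping. For item 2 I would note $\|v\|_{L^p(\Gamma_v)}^p=\int_{\Gamma_v}|v|^p\ge|\Gamma_v|$ (since $|v|>1$ on $\Gamma_v$); then if $\{v_n\}$ is bounded in $L^p+L^q$, the lower estimate of item 1 gives both $\|v_n\|_{L^q(\R^N\setminus\Gamma_{v_n})}\le C+1$ and $|\Gamma_{v_n}|^{1/p}\le\|v_n\|_{L^p(\Gamma_{v_n})}\le C(1+|\Gamma_{v_n}|^{1/\tau})$, and since $\tau=\frac{pq}{q-p}>p$ this last inequality forces $\{|\Gamma_{v_n}|\}$, hence $\{\|v_n\|_{L^p(\Gamma_{v_n})}\}$, to be bounded; the converse is immediate from the upper estimate. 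For item 3 I would first integrate $(f_2)$ from $0$ (using $f'(0)=0$) to get $|f'(s)|\le C|s|^{q-1}$ for $|s|\le1$ and $|f'(s)|\le C|s|^{p-1}$ for $|s|\ge1$; then for $\{v_n\}$ bounded I would split $\int_{\R^N}|f'(v_n)|^{p'}$ and $\int_{\R^N}|f'(v_n)|^{q'}$ over $\Gamma_{v_n}$ and its complement, use the identities $(p-1)p'=p$ and $(q-1)q'=q$ for the matching pieces, and the inequalities $(q-1)p'>q$ and $(p-1)q'<p$ (both equivalent to $p<q$) to dominate the remaining pieces on the region $|v_n|\le1$, resp. $|v_n|>1$, arriving at the bound $C\big(\|v_n\|_{L^p(\Gamma_{v_n})}^p+\|v_n\|_{L^q(\R^N\setminus\Gamma_{v_n})}^q\big)$, which is controlled by item 2.

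The delicate step is item 1 — in particular verifying that the canonical splitting is admissible (this is what legitimizes the $\max$ on the right-hand side) and extracting the precise factor $(1+|\Gamma_v|^{1/\tau})^{-1}$ on the left — but this is exactly the content of \cite{BF04,BM04}, so in the paper it can simply be invoked; items 2 and 3 can then be proved in a few lines as above.
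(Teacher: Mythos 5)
Your proposal is sound in substance, but it takes a different route from the paper for the simple reason that the paper contains no proof of this lemma at all: it is explicitly \emph{recalled} from \cite{BF04,BM04}, so the paper's ``approach'' is a citation, while you reconstruct a self-contained argument. Your reconstruction of items 2 and 3 is correct and essentially complete at sketch level: $\|v\|_{L^p(\Gamma_v)}^p\ge|\Gamma_v|$ together with $\tau=\frac{pq}{q-p}>p$ does extract item 2 from the lower estimate of item 1, and the integrated bounds $|f'(s)|\le C|s|^{q-1}$ for $|s|\le1$, $|f'(s)|\le C|s|^{p-1}$ for $|s|\ge1$, combined with $(q-1)p'>q$ and $(p-1)q'<p$, give item 3 with the bound $C\bigl(\|v\|_{L^p(\Gamma_v)}^p+\|v\|_{L^q(\R^N\smallsetminus\Gamma_v)}^q\bigr)$, which item 2 controls. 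What your version buys is transparency (the reader sees exactly which structural facts about $L^p+L^q$ and which part of $(f_2)$ are used); what the paper's choice buys is brevity, since these are verbatim results of \cite{BF04,BM04}.

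Two caveats on item 1, neither fatal but worth fixing if you keep the proof. First, with the norm defined as $\inf\{\|v_1\|_{L^p}+\|v_2\|_{L^q}\}$, your canonical splitting only yields the \emph{sum} $\|v\|_{L^p(\Gamma_v)}+\|v\|_{L^q(\R^N\smallsetminus\Gamma_v)}$ as an upper bound, not the max (and the max-form bound can actually fail for that norm on simple two-valued functions); the inequality with the max as stated is naturally attached to the equivalent duality-type norm the paper introduces, for which it follows from H\"older's inequality applied separately on $\Gamma_v$ and its complement rather than from admissibility of a decomposition. Since every later use of the lemma only needs the two-sided equivalence up to multiplicative constants, the sum is perfectly adequate, but then the statement should be adjusted or the norm fixed. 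Second, the ``$-1$'' part of the lower bound needs one more line than ``compare the $L^q$- and $L^p$-norms'': the comparison must be applied to the $L^p$-piece of an \emph{arbitrary} decomposition $v=v_1+v_2$, e.g.\ via $|v|\le\min(1,|v_1|)+|v_2|$ on $\R^N\smallsetminus\Gamma_v$, so that $\|v\|_{L^q(\R^N\smallsetminus\Gamma_v)}\le\|v_1\|_{L^p}^{p/q}+\|v_2\|_{L^q}\le1+\|v_1\|_{L^p}+\|v_2\|_{L^q}$; this is exactly where the additive $1$ comes from, whereas your H\"older step on $\Gamma_v$ (with exponents $q/p$ and $q/(q-p)$) is correct as written.
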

\begin{rem}\label{emb}
By the previous lemma we have
$L^{2^*}\subset L^p+L^q$ when $2<p<2^*<q$. Then, by Sobolev inequality,
we get the continuous embedding
\begin{equation*}
\D(\R^N)\subset L^p+L^q.
\end{equation*}
\end{rem}

In order to prove the $C^2$ regularity of the functional $E_g^V$, we need the following lemmas proved in
\cite{BM04}
\begin{lemma}\label{stimef}
If $f$ satisfies the hypothesis (\ref{f1}) and (\ref{f2}), we have that
\begin{enumerate}
\item if $\theta,u$ are bounded in $L^p+L^q$, then $f''(\theta)u$ is bounded in
$L^{p'}\cap L^{q'}$;
\item $f''$ is a bounded map from $L^p+L^q$ into $L^{p/p-2}\cap L^{q/q-2}$;
\item $f''$ is a continuous  map from $L^p+L^q$ into $L^{p/p-2}\cap L^{q/q-2}$;
\item the map $(u,v)\mapsto uv$ from $(L^p+L^q)^2$ in $L^{p/2}+ L^{q/2}$ is bounded.
\end{enumerate}
\end{lemma}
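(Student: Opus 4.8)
\emph{Proposed proof.} I would deduce all four assertions from the pointwise growth bounds (\ref{f2}) on $f''$ together with the two descriptions of the $L^{p}+L^{q}$ norm in Lemma~\ref{lp+lq}. The recurring mechanism is this: if $v$ is bounded in $L^{p}+L^{q}$, then by Lemma~\ref{lp+lq}(2) the measure $|\Gamma_{v}|$ and the quantity $\|v\|_{L^{q}(\R^{N}\smallsetminus\Gamma_{v})}+\|v\|_{L^{p}(\Gamma_{v})}$ stay bounded; and conversely, a function $w$ that lies in $L^{a}(A)$ on a set $A$ of finite measure and in $L^{b}(\R^{N}\smallsetminus A)$ with $a\le b$ actually lies in $L^{r}$ for the whole range $a\le r\le b$ (on $A$ use $|A|<\infty$, on the complement compare with the level set $\{|w|=1\}$, exactly as in the argument behind Remark~\ref{emb}). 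Throughout, the only exponent inequalities used are $q/(q-2)\le p/(p-2)$ and $p/2\le pq/(p+q)\le q/2$, all of which hold precisely because $2<p<q$.

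For part (2), given $v$ bounded in $L^{p}+L^{q}$ I split $\R^{N}=\Gamma_{v}\cup(\R^{N}\smallsetminus\Gamma_{v})$. On $\Gamma_{v}$ the first line of (\ref{f2}) gives $|f''(v)|^{p/(p-2)}\le c_{2}^{p/(p-2)}|v|^{p}$, integrable over $\Gamma_{v}$ with bounded integral; since $|\Gamma_{v}|<\infty$ this also puts $f''(v)$ in $L^{q/(q-2)}(\Gamma_{v})$. On $\R^{N}\smallsetminus\Gamma_{v}$, where $|v|\le1$, the second line of (\ref{f2}) gives $|f''(v)|^{q/(q-2)}\le c_{2}^{q/(q-2)}|v|^{q}\in L^{1}$, and, $|v|\le1$ again, $f''(v)$ belongs there to every $L^{r}$ with $r\ge q/(q-2)$, in particular to $L^{p/(p-2)}$. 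Gluing gives $f''(v)\in L^{p/(p-2)}\cap L^{q/(q-2)}$ with norm controlled by $\|v\|_{L^{p}+L^{q}}$. Part (4) is the bilinear analogue: writing $u=u_{1}+u_{2}$, $v=v_{1}+v_{2}$ with $u_{1},v_{1}\in L^{p}$, $u_{2},v_{2}\in L^{q}$ almost optimally and expanding $uv$, Hölder gives $u_{1}v_{1}\in L^{p/2}$, $u_{2}v_{2}\in L^{q/2}$, and the mixed terms $u_{1}v_{2},u_{2}v_{1}\in L^{pq/(p+q)}\subset L^{p/2}+L^{q/2}$; summing and optimising over the splittings yields the bound.

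For part (1) I would use the sharper information from the proof of (2): for $\theta$ bounded in $L^{p}+L^{q}$, the piece $f''(\theta)\mathbf 1_{\Gamma_{\theta}}$ lies in every $L^{s}$ with $1\le s\le p/(p-2)$, and $f''(\theta)\mathbf 1_{\R^{N}\smallsetminus\Gamma_{\theta}}$ lies in every $L^{r}$ with $r\ge q/(q-2)$, with norms bounded by $\|\theta\|_{L^{p}+L^{q}}$. Writing $u=u_{1}+u_{2}$ with $u_{1}\in L^{p}$, $u_{2}\in L^{q}$ and splitting once more over $\Gamma_{\theta}$ and its complement, each of the four integrals $\int|f''(\theta)u_{i}|^{p'}$, $\int|f''(\theta)u_{i}|^{q'}$ is handled by a single Hölder estimate whose conjugate exponents always fall in the admissible ranges above (this is again exactly where $2<p<q$ is used). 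Hence $f''(\theta)u\in L^{p'}\cap L^{q'}$, boundedly.

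The genuinely delicate part is (3), the continuity of $f''$. I would argue by sequences: if $v_{n}\to v$ in $L^{p}+L^{q}$, write $v_{n}-v=w_{n}+z_{n}$ with $w_{n}\to0$ in $L^{p}$ and $z_{n}\to0$ in $L^{q}$, and it suffices to show that every subsequence has a further subsequence along which $f''(v_{n})\to f''(v)$ in $L^{p/(p-2)}\cap L^{q/(q-2)}$. Along such a subsequence I may assume $w_{n}\to0$, $z_{n}\to0$ a.e.\ with $|w_{n}|\le W\in L^{p}$ and $|z_{n}|\le Z\in L^{q}$; then $v_{n}\to v$ a.e., and since $f''$ is continuous on $\R$ also $f''(v_{n})\to f''(v)$ a.e. The remaining task is to dominate $|f''(v_{n})-f''(v)|^{p/(p-2)}$ and $|f''(v_{n})-f''(v)|^{q/(q-2)}$ by fixed $L^{1}$ functions. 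The key idea is to bound $|v_{n}|$ by the \emph{fixed} envelope $G:=|v|+W+Z\in L^{p}+L^{q}$, whose super-level set $\Gamma_{G}$ is fixed and contains every $\{|v_{n}|\ge1\}$, and then to run the exponent bookkeeping of part (2) on the two pieces $\Gamma_{G}$ and $\R^{N}\smallsetminus\Gamma_{G}$, obtaining an $n$-independent $L^{1}$ bound; generalised dominated convergence then closes the argument. The main obstacle is exactly this interplay between the Nemytskii growth estimate and the \emph{moving} super-level sets $\Gamma_{v_{n}}$: it is what forces one to pass to an a.e.\ convergent, dominated subsequence and to dominate through a single $L^{p}+L^{q}$ envelope rather than working with the $\Gamma_{v_{n}}$ directly.
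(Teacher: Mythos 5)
Your proposal is correct, but note that the paper itself contains no proof of Lemma \ref{stimef}: it is quoted as a result proved in the reference [BM04] (Benci--Micheletti), so there is no internal argument to compare with, and what you have written is a self-contained substitute for that citation. Your mechanism --- split $\R^N$ into $\Gamma$ and its complement, use the two lines of (\ref{f2}) on the two pieces, control $|\Gamma_v|$ and the split norms via Lemma \ref{lp+lq}(2), and interpolate on the finite-measure piece respectively use boundedness of $f''(v)$ on $\{|v|\le 1\}$ --- is exactly the standard way these Nemytskii estimates on $L^p+L^q$ are obtained, and the exponent bookkeeping you leave implicit in part (1) does close: the only nontrivial Hölder exponent needed is $s=pq/(pq-p-q)$ for the cross terms $f''(\theta)u_1$ in $L^{q'}$ and $f''(\theta)u_2$ in $L^{p'}$, and $q/(q-2)\le pq/(pq-p-q)\le p/(p-2)$ holds precisely because $2<p\le q$, so $s$ is admissible both on $\Gamma_\theta$ (where $f''(\theta)\in L^s$ for $s\le p/(p-2)$, by finiteness of $|\Gamma_\theta|$) and on its complement (where $f''(\theta)\in L^s$ for $s\ge q/(q-2)$, by boundedness). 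Two small points to tidy in part (3): the fixed envelope should be used with the elementary bound $|f''(v_n)|\le c_2\max\left(|v_n|^{p-2},|v_n|^{q-2}\right)\le c_2 G^{p-2}$ on $\Gamma_G$ and $\le c_2 G^{q-2}$ on $\R^N\smallsetminus\Gamma_G$ (since on $\Gamma_G$ one may still have $|v_n|\le 1$, one cannot literally say $\Gamma_G$ contains $\{|v_n|\ge1\}$ and apply only the first line of (\ref{f2}) there), and ordinary dominated convergence plus the subsequence principle then suffices; with these cosmetic fixes the argument is complete.
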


\begin{lemma}The functional $E_g^V$ is of class $C^2$ and it holds
\begin{eqnarray}
E_g'(u)[v]=\langle \nabla E_g^V(u),v\rangle&=&\int_{\R^N}\nabla u\nabla v+Vuv-f'(u)v-gv;\\
E_g''(u)[v,w]&=&\int_{\R^N} \nabla v\nabla w +Vvw -f''(u)vw.
\end{eqnarray}
Moreover the Nehari manifold defined as
\begin{equation}
\Ne_g^V=\left\{u\in\D\smallsetminus 0\  :\ \int_{\R^N}|\nabla u|^2+Vu^2-f'(u)udx-gu=0\right\}
\end{equation}
is of class $C^1$ and its tangent space at the point $u$ is
\begin{equation*}
T_u{\Ne_g^V}=\left\{v\in\D:\int_{\R^N}2\nabla u\nabla v+2Vuv-f'(u)vdx-f''(u)uv-gv=0\right\}.
\end{equation*}
\end{lemma}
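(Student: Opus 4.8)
The plan is to write $E_g^V=Q-F-\ell$ with $Q(u)=\tfrac12\|u\|_V^2$, $\ell(u)=\int_{\R^N}gu$ and $F(u)=\int_{\R^N}f(u)$, and to reduce the whole statement to $F$. By $V\in L^{N/2}$ together with H\"older and Sobolev, $Q$ is a bounded quadratic form on $\D$, hence $C^\infty$; by $g\in L^{2N/(N+2)}=(L^{2^*})'$ and Sobolev, $\ell\in\D'$, hence $C^\infty$; their first and second derivatives produce every term of the claimed formulas for $\langle\nabla E_g^V(u),v\rangle$ and $E_g''(u)[v,w]$ except the ones containing $f$. So it remains to show that $F\in C^2(\D,\R)$ with $F'(u)[v]=\int_{\R^N}f'(u)v$ and $F''(u)[v,w]=\int_{\R^N}f''(u)vw$.

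First I would check that $F$ is finite on $\D$: by Remark~\ref{emb}, $\D\subset L^p+L^q$; integrating \eqref{f2} twice gives $|f(s)|\le C|s|^q$ for $|s|\le1$ and $|f(s)|\le C|s|^p$ for $|s|\ge1$; and Lemma~\ref{lp+lq}(1) gives $u\in L^p(\Gamma_u)$, $u\in L^q(\R^N\smallsetminus\Gamma_u)$, so $f(u)\in L^1$. For the derivatives the key device is to phrase differentiability through the second order Taylor remainder
\[
F(u+w)-F(u)-\int_{\R^N}f'(u)\,w=\int_0^1(1-s)\Bigl(\int_{\R^N}f''(u+sw)\,w^2\,dx\Bigr)ds ,
\]
so that only $f''$ enters. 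Applying the Orlicz duality stated in the excerpt with the exponents $p/2,q/2$ (for which $(p/2)'=\tfrac{p}{p-2}$, $(q/2)'=\tfrac{q}{q-2}$) gives $(L^{p/2}+L^{q/2})'=L^{p/(p-2)}\cap L^{q/(q-2)}$, so Lemma~\ref{stimef}(2) and~(4) yield $\bigl|\int_{\R^N}f''(u+sw)\,w^2\,dx\bigr|\le C(\|u\|+\|w\|)\,\|w\|^2$; hence the remainder is $o(\|w\|)$ and $F'(u)[v]=\int f'(u)v$ is the Fr\'echet derivative, bounded on $\D$ by Lemma~\ref{lp+lq}(3) and $(L^p+L^q)'=L^{p'}\cap L^{q'}$. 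Differentiating $F'$ in the same way shows $F$ is twice G\^ateaux differentiable with $F''(u)[v,w]=\int f''(u)vw$, and the bound
\[
|F''(u)[v,w]-F''(u')[v,w]|\le\|f''(u)-f''(u')\|_{L^{p/(p-2)}\cap L^{q/(q-2)}}\,\|vw\|_{L^{p/2}+L^{q/2}}
\]
combined with Lemma~\ref{stimef}(4), $\D\hookrightarrow L^p+L^q$, and the continuity of $f''$ from $L^p+L^q$ into $L^{p/(p-2)}\cap L^{q/(q-2)}$ (Lemma~\ref{stimef}(3)) shows that $u\mapsto F''(u)$ is continuous into the space of bounded bilinear forms on $\D$. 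Since a twice G\^ateaux differentiable functional with continuous second derivative is $C^2$, I conclude $E_g^V\in C^2$ with the asserted formulas.

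For the Nehari manifold I would set $G(u)=\langle\nabla E_g^V(u),u\rangle=\|u\|_V^2-\int f'(u)u-\int gu$. Since $E_g^V\in C^2$, the map $u\mapsto(\nabla E_g^V(u),u)\in\D'\times\D$ is $C^1$; composing with the bounded bilinear duality pairing gives $G\in C^1$ with $G'(u)[v]=E_g''(u)[u,v]+\langle\nabla E_g^V(u),v\rangle=\int 2\nabla u\nabla v+2Vuv-f''(u)uv-f'(u)v-gv$. Then I must verify that $0$ is a regular value of $G$ on $\D\smallsetminus0$, i.e. $G'(u)\neq0$ for $u\in\Ne_g^V$. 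Testing with $u$ and using $G(u)=0$ gives $G'(u)[u]=\|u\|_V^2-\int f''(u)u^2$; by \eqref{fmu}, $\int f''(u)u^2>\mu_2\int f'(u)u>0$, and on $\Ne_g^V$ one has $\int f'(u)u=\|u\|_V^2-\int gu$, hence $G'(u)[u]<(1-\mu_2)\int f'(u)u+\int gu$. This is strictly negative whenever $\int gu\le0$ (in particular for the homogeneous problem), and on the components of $\Ne_g^V$ relevant to the two solutions, for $g$ in the range considered, the remaining case is ruled out by the fibering analysis of Section~2: since $f'''(s)s^3>0$, the map $t\mapsto\langle\nabla E_g^V(tu),tu\rangle=t\,\psi_u(t)$ has $\psi_u$ strictly concave on $(0,\infty)$, so $\psi_u$ vanishes at $t=1$ away from its maximum and $\psi_u'(1)\neq0$. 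Having $G'(u)\neq0$, the implicit function theorem makes $\Ne_g^V=G^{-1}(0)\cap(\D\smallsetminus0)$ a $C^1$ submanifold of codimension one with $T_u\Ne_g^V=\ker G'(u)$, which is precisely the tangent space in the statement.

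The step I expect to be the main obstacle is the $C^2$ regularity of $F$: because $f$ has supercritical growth near the origin ($q>2^*$), the usual Nemytskii/dominated-convergence argument on $\D\subset L^{2^*}$ breaks down, and one is forced to work in the Orlicz space $L^p+L^q$. The point that makes the estimates close is to reduce every differentiability question to the behaviour of $f''$, whose growth exponents $p-2$ and $q-2$ are exactly matched to the duality $(L^{p/2}+L^{q/2})'=L^{p/(p-2)}\cap L^{q/(q-2)}$, with Lemma~\ref{stimef} supplying all the operator bounds.
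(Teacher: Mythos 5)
Your handling of the $C^2$ regularity is correct and is essentially the route the paper intends: the paper does not prove this lemma itself but delegates it to the Orlicz-space machinery of Lemmas \ref{lp+lq} and \ref{stimef} (i.e.\ to \cite{BM04}), and your decomposition $E_g^V=Q-F-\ell$ together with the second-order Taylor remainder and the duality $(L^{p/2}+L^{q/2})'=L^{p/(p-2)}\cap L^{q/(q-2)}$ is exactly how those lemmas are meant to be used; the Nehari part via $G(u)=\langle\nabla E_g^V(u),u\rangle$ and the implicit function theorem is also the standard (intended) scheme.

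The gap is in the regular-value step when $\int gu>0$. Your estimate $G'(u)[u]<(1-\mu_2)\int f'(u)u+\int gu$ settles the case $\int gu\le0$, but the sentence ``$\varphi_g'$ is strictly concave on $(0,\infty)$, so it vanishes at $t=1$ away from its maximum and $\varphi_g''(1)\neq0$'' is a non sequitur: a strictly concave function can vanish precisely at its maximum point, and that is exactly the degenerate situation to be excluded. On a ray where $\max_{t>0}\varphi_g'(t)=0$, i.e.\ $\int gu=\varphi_0'(T_u)$, the corresponding point of $\Ne_g^V$ satisfies $G'(u)[u]=\varphi_g''(1)=0$ and your argument gives nothing (and $G'(u)\neq0$ is then not clear at all). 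What rules this out is not concavity but the quantitative smallness of $g$: normalizing $\|u\|_V=1$, Lemma \ref{L} gives $\varphi_0'(T_u)\geq L>0$ while $\int gu\leq C_2\|g\|_{L^{2N/(N+2)}}$, so for $C_2\|g\|_{L^{2N/(N+2)}}<L$ the maximum of $\varphi_g'$ along every ray is strictly positive and, by strict concavity, every zero of $\varphi_g'$ is simple --- this is precisely the content of Proposition \ref{gsmall}, part 3. So you should either add explicitly the standing assumption that $\|g\|_{L^{2N/(N+2)}}$ is small and invoke Lemma \ref{L} (there is no circularity, since that lemma only uses $\varphi_0$, the manifold $\V$ and (\ref{fmu})), or restrict the regularity claim to $\Ne_g^+\cup\Ne_g^-$, where $G'(u)[u]=E_g''(u)[u,u]\neq0$ holds by definition. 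Your vague appeal to ``the fibering analysis of Section~2, for $g$ in the range considered'' points in the right direction, but as written the stated implication is false; note that the paper's lemma, read literally for arbitrary $g$, suffers from the same issue, the smallness of $g$ being a standing hypothesis of the paper.
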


At last, we introduce the functions
\begin{eqnarray}
\varphi_0^u(t)&=&\varphi_0(t):=E_0^V(tu)=
\int_{\R^N}\frac{t^2}{2}(|\nabla u|^2+Vu^2)-f(tu);\label{phi0}\\
\varphi_g^u(t)&=&\varphi_g(t):=E_g^V(tu)=
\varphi_0(t)-t\int_{\R^N} gu.\label{phig}
\end{eqnarray}
We have that
\begin{eqnarray}
\varphi_g'(t)&=& t||u||_V^2- \int_{\R^N} f'(tu)u-\int_{\R^N} gu;\\
\varphi_g''(t)&=&||u||_V^2-\int_{\R^N} f''(tu)u^2;\\
\varphi_g'''(t)&=&-\int_{\R^N} f'''(tu)u^3.
\end{eqnarray}

Notice that the conditions on $f$ assure that also $\varphi_g'''(t)$ exists.
Furthermore, if $\frac{d}{dt}\varphi_g(\bar t)=0$, then
$\langle\nabla E(\bar tu),u\rangle=0$, so $\bar tu\in\Ne_g^V$, and {\em vice versa},
so we want to find the critical points of $\varphi_g(t)$.

To study the manifold $\Ne_g^V$ it is useful to consider the following manifold:
\begin{equation}
\V=\left\{ w\neq0:G(w):=||w||_V^2-\int_{\R^N} f''(w)w^2=0\right\}.
\end{equation}

\begin{lemma}
We have that for all $u\in\D$ there exists an unique $T_u>0$ such that
$T_uu\in\V$
\end{lemma}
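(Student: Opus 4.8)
The plan is to study the scalar function $h_u(t) = G(tu)$ for $t>0$ and show it has a unique zero. Writing it out,
\begin{equation*}
h_u(t) = G(tu) = t^2\|u\|_V^2 - \int_{\R^N} f''(tu)(tu)^2\,dx,
\end{equation*}
so that finding $T_u$ with $T_uu\in\V$ is equivalent to finding the unique positive root of $h_u$. First I would divide by $t^2$ and set
\begin{equation*}
\psi_u(t) := \frac{h_u(t)}{t^2} = \|u\|_V^2 - \int_{\R^N} \frac{f''(tu)(tu)^2}{t^2}\,dx = \|u\|_V^2 - \int_{\R^N} \frac{f''(tu)u^2\,(tu)^2}{(tu)^2}\cdot\frac{(tu)^2}{t^2},
\end{equation*}
or more cleanly keep track of the integrand $\frac{f''(tu(x))(tu(x))^2}{t^2} = u(x)^2\cdot\frac{f''(tu(x))(tu(x))^2}{(tu(x))^2}$ pointwise where $u(x)\neq0$. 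The key monotonicity input is the second inequality in $(f_\mu)$, namely $\mu_2 f'(s)s < f''(s)s^2$ with $\mu_2>1$, together with $f'''(s)s^3>0$; these force the function $s\mapsto f''(s)s^2/s^2 = f''(s)$ — more precisely the relevant ratio — to be strictly increasing in $|s|$. Concretely, I would show $\frac{d}{ds}\bigl(f''(s)s^2\bigr)\cdot s = f'''(s)s^3 + 2f''(s)s^2 > 0$, and that $s\mapsto f''(s)s^2/(f'(s)s)$ exceeds $\mu_2>1$; combining these gives that $t\mapsto \int_{\R^N} f''(tu)(tu)^2\,dx \big/ t^2$ is strictly increasing on $(0,\infty)$, hence $\psi_u$ is strictly decreasing.

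Next I would pin down the boundary behavior. As $t\to0^+$: since $f\in C^3$ with $f(0)=f'(0)=f''(0)=0$, and using $(f_2)$ which gives $|f''(s)|\le c_2|s|^{q-2}$ for $|s|\le1$ with $q>2^*>2$, the integrand $f''(tu)u^2$ is controlled by $t^{q-2}|u|^q$ on the (eventually full) region where $t|u|\le1$, so $\int f''(tu)u^2\,dx \to 0$ and thus $\psi_u(t)\to\|u\|_V^2>0$. As $t\to\infty$: I use the first inequality in $(f_\mu)$, $f''(s)s^2 > \mu_2 f'(s)s > \mu_1\mu_2 f(s)$ with $\mu_1\mu_2>2$, together with the lower bound in $(f_0)$, $f(s)\ge c_0|s|^p$ for $|s|\ge1$ with $p>2$. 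Since $u\not\equiv0$, the set where $t|u(x)|\ge1$ eventually has positive measure and $\int_{\{t|u|\ge1\}} f(tu)\,dx \gtrsim t^p\int|u|^p \to\infty$ faster than $t^2$, so $\int f''(tu)(tu)^2\,dx / t^2 \to +\infty$, whence $\psi_u(t)\to-\infty$. By continuity (which follows from $f\in C^3$ and the integrability built into $(f_3)$, or dominated convergence using the Orlicz bounds of Lemma~\ref{stimef}) and strict monotonicity, $\psi_u$ has exactly one zero $T_u\in(0,\infty)$, and this $T_u$ is the unique point with $T_uu\in\V$.

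The main obstacle will be justifying the differentiation under the integral sign and the limit computations rigorously in the $\D$ setting rather than $H^1$: one only has $u\in L^p+L^q$ via Remark~\ref{emb}, not $u\in L^2$, so all estimates on $\int f''(tu)u^2$ must be run through the Orlicz-space machinery — specifically item~1 of Lemma~\ref{stimef} (boundedness of $f''(\theta)u$ in $L^{p'}\cap L^{q'}$) and item~4 (the product map into $L^{p/2}+L^{q/2}$) — to bound $\int f''(tu)u\cdot u$ by pairing these. I expect the continuity and finiteness of $t\mapsto\int f''(tu)(tu)^2$ to follow from Lemma~\ref{stimef} and the $C^2$ regularity lemma (indeed $\int f''(tu)u^2 = \varphi_g''(t)$ terms already appear there), so the bulk of the work is organizing the monotonicity argument from $(f_\mu)$ cleanly; the existence/uniqueness conclusion is then immediate from the intermediate value theorem plus strict monotonicity.
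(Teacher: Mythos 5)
Your proposal is correct and is essentially the paper's own argument in different clothing: your $\psi_u(t)=\|u\|_V^2-\int f''(tu)u^2$ is exactly $\frac{d^2}{dt^2}\varphi_0^u(t)$, and your strict monotonicity from $f'''(s)s^3>0$ is precisely the paper's strict concavity of $\varphi_0'$, with $T_u$ characterized as the unique zero of $\varphi_0''$. The only (cosmetic) difference is that you force $\psi_u(t)\to-\infty$ directly through the chain $f''(s)s^2>\mu_2 f'(s)s\geq\mu_1\mu_2 f(s)\geq c_0\mu_1\mu_2|s|^p$, while the paper gets the sign change of $\varphi_0''$ indirectly from $\varphi_0'(t)\to-\infty$ together with concavity; both variants use the same hypotheses and the same intermediate-value/monotonicity conclusion.
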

\begin{proof}
We have that,
using (\ref{fmu}) and (\ref{f1}),
\begin{eqnarray}
\varphi_0'(t)&=&t||u||_V^2- \int_{\R^N} f'(tu)u\leq
t||u||_V^2- \frac{\mu_1}{t}\int_{\R^N} f(tu)\leq\nonumber\\
&\leq&t||u||_V^2-t^{q-1}c_0\mu_1\int_{t|u|<1}|u|^q
-t^{p-1}c_0\mu_1\int_{t|u|\geq 1}|u|^p\leq\label{phidiv}\\
&\leq&t||u||_V^2-t^{p-1}c_0\mu_1
\int_{|u|\geq 1}|u|^p\rightarrow-\infty\text{ when } t\rightarrow\infty,
\nonumber
\end{eqnarray}
because $p>2$. Furthermore we have that $\varphi_0'(t)$ is strictly concave when $t\neq 0$, and that
$\varphi_0''(0)>0$, so, for every $u\in\D$ there exist an
unique maximum point $T_u>0$ for the function $\varphi_0'(t)$.
Thus
\begin{equation*}
0=T_u^2\varphi_0''(T_u)=||T_uu||_V^2-\int_{\R^N} f''(T_uu)(T_uu)^2.
\end{equation*}
\end{proof}
\begin{prop}\label{infV}
We have that $\inf\limits_{w\in\V}||w||_V^2>0.$
\end{prop}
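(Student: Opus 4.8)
The plan is to bound, for every $w\in\V$, the quantity $\int_{\R^N}f''(w)w^2$ from above by a power of $||w||_V$ strictly larger than $2$; since on $\V$ this integral equals $||w||_V^2$, that gives the desired lower bound directly (and explicitly), with no need to argue by contradiction.

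First I would use the growth hypothesis (\ref{f2}) on each of the two regions determined by $\Gamma_w$. This yields $\int_{\R^N}f''(w)w^2\le c_2\int_{\Gamma_w}|w|^{p}+c_2\int_{\R^N\smallsetminus\Gamma_w}|w|^{q}$. On $\R^N\smallsetminus\Gamma_w$, where $|w|\le 1$ and $q>2^*$, one has $|w|^{q}\le|w|^{2^*}$, hence $\int_{\R^N\smallsetminus\Gamma_w}|w|^{q}\le||w||_{L^{2^*}}^{2^*}$. On $\Gamma_w$, where $|w|>1$, Chebyshev's inequality gives $|\Gamma_w|\le\int_{\Gamma_w}|w|^{2^*}\le||w||_{L^{2^*}}^{2^*}$, and then Hölder's inequality with exponents $2^*/p$ and $2^*/(2^*-p)$ (admissible since $p<2^*$) gives $\int_{\Gamma_w}|w|^{p}\le|\Gamma_w|^{1-p/2^*}||w||_{L^{2^*}}^{p}\le||w||_{L^{2^*}}^{2^*}$. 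Adding the two contributions, $\int_{\R^N}f''(w)w^2\le 2c_2||w||_{L^{2^*}}^{2^*}$.

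Next I would convert the $L^{2^*}$ norm into $||\cdot||_V$: the Sobolev inequality gives $||w||_{L^{2^*}}^{2}\le S^{-1}||w||^{2}$, and, because $V\le 0$ and $||V||_{L^{N/2}}<S$, we have $||w||_V^2=||w||^2-\int_{\R^N}|V|w^2\ge (1-||V||_{L^{N/2}}/S)||w||^2$, i.e. $||w||^2\le(1-||V||_{L^{N/2}}/S)^{-1}||w||_V^2$ (this is just the norm equivalence already recorded in the excerpt, with an explicit constant). Putting everything together, $||w||_V^2=\int_{\R^N}f''(w)w^2\le K\,||w||_V^{2^*}$ with $K=2c_2S^{-2^*/2}(1-||V||_{L^{N/2}}/S)^{-2^*/2}$. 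Since $w\neq 0$ forces $||w||_V>0$, dividing by $||w||_V^2$ gives $1\le K\,||w||_V^{2^*-2}$, and therefore $||w||_V^2\ge K^{-2/(2^*-2)}>0$ for all $w\in\V$, which proves the proposition.

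The computation is essentially routine; the one step that deserves a little attention is the two-region estimate for $\int f''(w)w^2$, and specifically the use of Chebyshev's inequality to control $|\Gamma_w|$ by $||w||_{L^{2^*}}^{2^*}$ — this is exactly what makes the $\Gamma_w$-contribution scale like $||w||_{L^{2^*}}^{2^*}$ rather than a lower power, so that the resulting inequality has the superquadratic form $||w||_V^2\le K||w||_V^{2^*}$. Everything else is the Sobolev embedding and the norm equivalence from condition (\ref{condV}). Alternatively one could take a sequence $w_n\in\V$ with $||w_n||_V\to 0$ and derive the same inequality $1\le K||w_n||_V^{2^*-2}$ for a contradiction; I prefer the direct version since it yields an explicit positive lower bound.
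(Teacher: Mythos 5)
Your proof is correct, but it follows a genuinely different route from the paper's. The paper argues by contradiction: it takes $w_n\in\V$ with $\|w_n\|_V\to 0$, writes $w_n=t_nv_n$ with $t_n=\|w_n\|_V$ and $\|v_n\|_V=1$, and uses the constraint together with (\ref{f2}) to obtain
\begin{equation*}
1\leq c_2t_n^{q-2}\int_{\R^N\smallsetminus\Gamma_{v_n}}|v_n|^q+2c_2t_n^{p-2}\int_{\Gamma_{v_n}}|v_n|^p,
\end{equation*}
and then invokes the $L^p+L^q$ machinery (Lemma \ref{lp+lq}, via the boundedness of $v_n$ in $L^p+L^q$ coming from Remark \ref{emb}) to see that the right-hand side vanishes because $p,q>2$, a contradiction. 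You instead exploit the position of the critical exponent, $p<2^*<q$, to compare $f''(w)w^2$ pointwise with $|w|^{2^*}$ on both regions, obtaining $\|w\|_V^2=\int f''(w)w^2\leq 2c_2\|w\|_{L^{2^*}}^{2^*}\leq K\|w\|_V^{2^*}$ by Sobolev and the norm equivalence from (\ref{condV}); since $2^*>2$ this yields the explicit lower bound $\|w\|_V^2\geq K^{-2/(2^*-2)}$. Your version is more elementary (no Orlicz-space lemma, no contradiction/normalization) and has the advantage of an explicit constant; the paper's version keeps the two powers $p$ and $q$ separate and stays entirely within the $L^p+L^q$ framework used throughout the rest of the article, and its scaling argument ($t_n^{p-2},t_n^{q-2}\to0$) would also work in situations where a single comparison exponent between $p$ and $q$ with a Sobolev embedding is not available. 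One small simplification of your argument: the Chebyshev-plus-H\"older step is not needed, since on $\Gamma_w$ one has $|w|>1$ and $p<2^*$, so $|w|^p\leq|w|^{2^*}$ pointwise, which gives $\int_{\Gamma_w}|w|^p\leq\|w\|_{L^{2^*}}^{2^*}$ directly.
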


\begin{proof}

By contradiction, we suppose that there exists a sequence $\{w_n\}_n\subset \V$ such that
$||w_n||_V^2$ converges to 0. We set $t_n=||w_n||_V$, hence we can write $w_n=t_nv_n$
where $||v_n||_V=1$. Remark \ref{emb}, the sequence is bounded in
$L^p+L^q$. Since $w_n\in\V$ and $t_n$ converges to 0, we have
\begin{eqnarray*}
1&=&||v_n||_V^2=\frac{||w_n||_V^2}{t_n^2}=
\frac 1{t_n^2}\int\limits_{\R^N} f''(t_nv_n)v_n^2\leq\\
&&\leq c_2t_n^{q-2}\int\limits_{\R^N\smallsetminus \Gamma_{t_nv_n}}|v_n|^q+
c_2t_n^{p-2}\int\limits_{\Gamma_{t_nv_n}}|v_n|^p\leq\\
&&\leq c_2t_n^{q-2}\int\limits_{\R^N\smallsetminus \Gamma_{t_nv_n}}|v_n|^q+
c_2t_n^{p-2}\int\limits_{\Gamma_{v_n}}|v_n|^p\leq\\
&&\leq c_2t_n^{q-2}\!\!\!\!\int\limits_{\R^N\smallsetminus \Gamma_{v_n}}\!\!\!\!|v_n|^q+
c_2t_n^{q-2}\!\!\!\!\!\!\!\!\int\limits_{(\R^N\smallsetminus \Gamma_{t_nv_n})\cap\Gamma_{v_n}}
\!\!\!\!\!\!\!\!\frac{|v_n|^p}{t_n^{q-p}}
+c_2t_n^{p-2}\int\limits_{\Gamma_{v_n}}|v_n|^p\leq\\
&&\leq c_2t_n^{q-2}\int\limits_{\R^N\smallsetminus \Gamma_{v_n}}|v_n|^q+
2c_2t_n^{p-2}\int\limits_{\Gamma_{v_n}}|v_n|^p.
\end{eqnarray*}
Hence we get
\begin{displaymath}
1\leq c_2t_n^{q-2}\int\limits_{\R^N\smallsetminus \Gamma_{v_n}}|v_n|^q+
2c_2t_n^{p-2}\int\limits_{\Gamma_{v_n}}|v_n|^p
\end{displaymath}
and by claim 2 of Remark \ref{lp+lq} we get the contradiction.
\end{proof}
\begin{lemma}\label{L}
Let $u\in\D$ and let $T_u$ the unique positive number such that $T_uu\in\V$. Then
\begin{equation}
L=\inf_{||u||_V=1}T_u-\int_{\R^N} f'(T_uu)u>0.
\end{equation}
\end{lemma}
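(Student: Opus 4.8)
\medskip
\noindent\emph{Proof strategy.} The first step is to recognise the quantity $T_u-\int_{\R^N}f'(T_uu)u$. Recall from the proof of the existence of $T_u$ that, for $u$ with $||u||_V=1$, the number $T_u$ (defined by $T_uu\in\V$) is precisely the unique maximum point on $(0,\infty)$ of the function $t\mapsto\varphi_0'(t)=t\,||u||_V^2-\int_{\R^N}f'(tu)u$, which is strictly concave there since $\varphi_0'''(t)=-\int_{\R^N}f'''(tu)u^3<0$ by (\ref{fmu}), with $\varphi_0''(0)=||u||_V^2>0$ and $\varphi_0'(t)\to-\infty$. Hence
\[
T_u-\int_{\R^N}f'(T_uu)u=\varphi_0'(T_u)=\max_{t>0}\varphi_0'(t)\ \ge\ \varphi_0'(t_0)\qquad\text{for every fixed }t_0>0 .
\]
So it suffices to exhibit one $t_0>0$, independent of $u$, such that $\varphi_0'(t_0)\ge\delta>0$ uniformly on the sphere $\{||u||_V=1\}$; then $L\ge\delta>0$.

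To obtain such a $t_0$, I would first record a uniform a priori bound: by the equivalence of $||\cdot||_V$ with $||\cdot||$ (a consequence of (\ref{condV})) together with the Sobolev inequality, there is a constant $D$, depending only on $N$, $||V||_{L^{N/2}}$ and $S$, with $||u||_{L^{2^*}}\le D$ whenever $||u||_V=1$. Secondly, integrating (\ref{f2}) twice from $0$ and using $f(0)=f'(0)=f''(0)=0$ gives the elementary pointwise bounds $|f'(s)|\le C_q|s|^{q-1}$ for $|s|\le1$ and $|f'(s)|\le C_p|s|^{p-1}$ for $|s|\ge1$, with $C_p,C_q$ depending only on $c_2,p,q$.

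Then, for $0<t\le1$, I would split $\int_{\R^N}f'(tu)u$ over the three sets $\{|u|\le1\}$, $\{1<|u|\le1/t\}$ and $\{|u|>1/t\}$, applying the two pointwise bounds for $f'$ in the appropriate piece and using $|u|^q\le|u|^{2^*}$ on $\{|u|\le1\}$, $|u|^p\le|u|^{2^*}$ on $\{|u|>1/t\}\subset\{|u|\ge1\}$, and the interpolation $|u|^q\le(1/t)^{q-2^*}|u|^{2^*}=t^{2^*-q}|u|^{2^*}$ on the middle set. This yields, uniformly for $||u||_V=1$,
\[
\Big|\int_{\R^N}f'(tu)u\Big|\ \le\ D^{2^*}\big(C_q(t^{q-1}+t^{2^*-1})+C_pt^{p-1}\big),\qquad 0<t\le1 .
\]
Since $2<p<2^*<q$, the exponents $p-2$, $2^*-2$, $q-2$ are all strictly positive, so $\varphi_0'(t)\ge t\big(1-D^{2^*}(C_q(t^{q-2}+t^{2^*-2})+C_pt^{p-2})\big)$ for $0<t\le1$; choosing $t_0\in(0,1]$ small enough (depending only on $D,C_p,C_q,p,q,N$) that the bracket is $\ge1/2$ gives $\varphi_0'(t_0)\ge t_0/2$ on the whole sphere, hence $L\ge t_0/2>0$.

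The one point needing care is the uniformity of the estimate on $\int f'(tu)u$: the two power bounds for $f'$ do not, by themselves, interpolate against a single $L^{2^*}$ norm over the intermediate range $1<|u|\le1/t$, and the small exponent $q-2^*$ of the factor $1/t$ that appears there must be absorbed into the $t^{q-1}$ prefactor — this is exactly what produces the extra term $t^{2^*-1}$ above, which is harmless because $2^*>2$. The argument could equivalently be run by contradiction, assuming a sequence $u_n$ on the sphere with $\varphi_0'(T_{u_n})\to0$ and contradicting $\varphi_0'(T_{u_n})\ge\varphi_0'(t_0)\ge t_0/2$, but the direct estimate already gives the uniform lower bound. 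Note that Proposition~\ref{infV} is not needed here; only the identification of $T_u$ as the maximiser of $\varphi_0'$ is used.
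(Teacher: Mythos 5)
Your proof is correct, but it takes a genuinely different route from the paper's. The paper argues by contradiction: for a sequence $u_n$ with $||u_n||_V=1$ and $\sigma_n:=T_{u_n}-\int f'(T_{u_n}u_n)u_n\rightarrow0$, it sets $w_n=T_{u_n}u_n\in\V$, combines $||w_n||_V^2=\int f''(w_n)w_n^2$ with the second inequality in (\ref{fmu}), namely $\mu_2 f'(s)s<f''(s)s^2$, to get $(\mu_2-1)||w_n||_V<\mu_2\sigma_n\rightarrow0$, and this contradicts Proposition \ref{infV} ($\inf_{w\in\V}||w||_V^2>0$). You instead identify $T_u-\int f'(T_uu)u=\varphi_0'(T_u)=\max_{t>0}\varphi_0'(t)$ (legitimate: the preceding lemma gives strict concavity of $\varphi_0'$ on $(0,\infty)$, via $f'''(s)s^3>0$ and (\ref{f3}), and $\varphi_0''(T_u)=0$), and then exhibit a fixed $t_0>0$ with $\varphi_0'(t_0)\geq t_0/2$ uniformly on the sphere, using the $f'$ growth bounds obtained by integrating (\ref{f2}), the three-set splitting with the interpolation $|u|^q\leq t^{2^*-q}|u|^{2^*}$ on $\{1<|u|\leq 1/t\}$, and the uniform $L^{2^*}$ bound coming from (\ref{condV}) together with the Sobolev inequality; since $p-2$, $2^*-2$, $q-2>0$, the bracket can indeed be made $\geq 1/2$ for small $t_0$, and all these estimates check out. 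What each approach buys: the paper's argument is shorter and purely structural, but it leans on the $\mu_2$-condition and on Proposition \ref{infV} and gives no quantitative information; yours avoids both, uses essentially only (\ref{f2}) plus the already established concavity, and produces an explicit lower bound $L\geq t_0/2$ with $t_0$ computable from $c_2,p,q,N$ and $||V||_{L^{N/2}}$. (Only a slip of wording: a single integration of (\ref{f2}), using $f'(0)=0$, already yields the stated bounds on $f'$.)
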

\begin{proof}
By contradiction, suppose that there exists a
minimizing sequence $u_n$,
with $||u_n||_V=1$ such that $\displaystyle T_{u_n}-\int f'(T_{u_n}{u_n}){u_n}:=\sigma_n\rightarrow0$.
Let $w_n=T_{u_n}u_n$. We have that
\begin{equation*}
T_{u_n}^2=||w_n||_V^2=\int f''(w_n)w_n^2,
\end{equation*}
because $w_n\in\V$. Furthermore, by hypothesis, we have
\begin{equation*}
||w_n||_V=\int f'(w_n)\frac{w_n}{||w_n||_V}+\sigma_n.
\end{equation*}
Thus, by \ref{fmu},
\begin{eqnarray*}
\mu_2||w_n||_V^2&=&\mu_2\int f'(w_n)w_n+\mu_2\sigma_n||w_n||_V<\\
&<&\int f''(w_n)w_n^2+\mu_2\sigma_n||w_n||_V=\\
&=&||w_n||_V^2+\mu_2\sigma_n||w_n||_V.
\end{eqnarray*}
So, because $\mu_2>1$ we have that
\begin{equation}
0<(\mu_2-1)||w_n||_V<\mu_2\sigma_n\rightarrow0,
\end{equation}
that is a contradiction.
\end{proof}
\begin{rem}Obviously, by Lemma \ref{infV}
we have also
$$
B:=\inf_{||u||_V=1}T_u>0,
$$
and $B$ does not depend on $g$.
\end{rem}
At last we can give the following characterization of the Nehari manifold.
\begin{prop}\label{gsmall}
Let $||g||_{L^{\frac{2N}{N+2}}}$, sufficiently small, and let $u\in\D$ with $||u||_V=1$.
Then
\begin{enumerate}
\item If $\displaystyle\int\! gu<0$, then there exists an unique $t^1_u$ such that $t^1_uu\in\Ne_g^V$ and
$t_u^0<t^1_u$, where $t_u^0$ is the unique value for which $t_u^0\in\Ne_0^V$.
\item If $\displaystyle\int\! gu=0$, then there exists an unique $t^1_u$ such that $t^1_uu\in\Ne_g^V$ and
$t_u^0=t^1_u$.
\item If $\displaystyle\int\! gu>0$, then there exist two positive numbers $t_u^1$  and $t_u^2$ such that
$t_u^ju\in\Ne_g^V$ and
$t_u^2<T_u<t_u^1<t_u^0$,
where $T_u$ is the unique value for which $T_uu\in\V$.
\item $t^1_u$ and $t^2_u$ depend $C^1$ on $g\in L^\frac{2N}{N+2}$ and on $u\in\D\smallsetminus \{0\}$. Furthermore,
fixed $u$, we have $t^1_u\rightarrow t^0_u$, when $||g||_{L^\frac{2N}{N+2}}\rightarrow0$.
\end{enumerate}
\end{prop}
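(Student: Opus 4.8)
The plan is to reduce the whole statement to the elementary analysis of the real function $\varphi_g^u(t)=E_g^V(tu)$ for $t>0$, where $u$ is fixed with $\|u\|_V=1$. Its derivative is
$\varphi_g'(t)=t-\int_{\R^N}f'(tu)u-\int_{\R^N}gu=\varphi_0'(t)-\int_{\R^N}gu$,
so $\varphi_g'$ is nothing but the vertical translate of $\varphi_0'$ by the constant $-\int gu$. From the earlier lemmas I may use that $\varphi_0'$ is strictly concave on $(0,\infty)$, that $\varphi_0'(0)=0$ and $\varphi_0''(0)=\|u\|_V^2=1>0$, that $\varphi_0'(t)\to-\infty$ as $t\to\infty$, that $\varphi_0'$ has a unique maximum at $T_u$ (the point with $T_uu\in\V$), and hence a unique positive zero $t_u^0>T_u$ (the point with $t_u^0u\in\Ne_0^V$). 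Since $tu\in\Ne_g^V$ is equivalent to $\varphi_g'(t)=0$, claims $1$--$3$ are just the picture of this translated concave curve, read off case by case on the sign of $\int gu$.

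When $\int gu\le0$ one has $\varphi_g'(0)=-\int gu\ge0$; since $\varphi_g''=\varphi_0''>0$ on $[0,T_u)$, $\varphi_g'$ stays positive on $(0,T_u]$ and then decreases strictly to $-\infty$ on $[T_u,\infty)$, so there is a unique zero $t_u^1\in[T_u,\infty)$; and because $\varphi_g'\ge\varphi_0'$ with $\varphi_g'(t_u^0)=-\int gu\ge0$ while $\varphi_g'$ is decreasing past $T_u$, we get $t_u^1\ge t_u^0$, strict when $\int gu<0$ and an equality when $\int gu=0$ (here $\varphi_g'=\varphi_0'$ outright). This gives claims $1$ and $2$. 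When $\int gu>0$ we have $\varphi_g'(0)=-\int gu<0$, so to obtain two zeros I need the maximum value $\varphi_g'(T_u)=\varphi_0'(T_u)-\int gu$ to be positive, and this is precisely where smallness of $g$ is used — and it must be used uniformly in $u$. By Lemma~\ref{L}, $\varphi_0'(T_u)=T_u-\int f'(T_uu)u\ge L>0$ for every $u$ with $\|u\|_V=1$, while H\"older, Sobolev and the equivalence of $\|\cdot\|_V$ with $\|\cdot\|$ give $\int gu\le\|g\|_{L^{2N/(N+2)}}\|u\|_{L^{2^*}}\le C\,\|g\|_{L^{2N/(N+2)}}$. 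Hence if $\|g\|_{L^{2N/(N+2)}}<L/C$ then $\varphi_g'(T_u)>0$ for all such $u$, and strict concavity forces exactly two zeros $t_u^2<T_u<t_u^1$; finally $\varphi_g'<\varphi_0'$ together with $\varphi_g'(t_u^0)=-\int gu<0$ forces $t_u^1<t_u^0$, which is claim $3$ (and $t_u^2<T_u<t_u^1<t_u^0$).

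For claim $4$ I would apply the implicit function theorem to $H(t,u,g):=\langle\nabla E_g^V(tu),u\rangle$, which is $C^1$ in $(t,u,g)$ because $E_g^V\in C^2$ and $H$ is affine in $g$. At $t=t_u^1>T_u$ one has $\partial_tH=\varphi_g''(t_u^1)<0$ and at $t=t_u^2<T_u$ one has $\partial_tH=\varphi_g''(t_u^2)>0$, since $\varphi_g'$ is strictly concave with its unique maximum at $T_u$ and we have already checked $t_u^1,t_u^2\neq T_u$; thus both zeros are non-degenerate and depend $C^1$ on $(u,g)$ on the relevant open sets. The last assertion, $t_u^1\to t_u^0$ as $\|g\|_{L^{2N/(N+2)}}\to0$ with $u$ fixed, follows from this continuity together with $t_u^1=t_u^0$ at $g=0$ (or directly, since $\varphi_g'\to\varphi_0'$ uniformly on compacta and $t_u^0$ is a simple zero). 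The only real obstacle is recognizing that one must invoke Lemma~\ref{L} — the uniform lower bound $L$ over the sphere $\{\|u\|_V=1\}$ — rather than a bound depending on $u$, in order that a single smallness threshold on $\|g\|_{L^{2N/(N+2)}}$ work for all $u$ simultaneously; the remainder is calculus on a concave curve and a routine implicit-function-theorem argument.
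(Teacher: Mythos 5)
Your proof is correct and takes essentially the same route as the paper: reduction to the fibering map $\varphi_g^u$, use of Lemma \ref{L} together with H\"older and Sobolev inequalities to obtain a smallness threshold on $\|g\|_{L^{2N/(N+2)}}$ that is uniform over the sphere $\|u\|_V=1$, and the implicit function theorem applied to $G(t,u,g)=\frac{d}{dt}\varphi_g^u(t)$ for claim 4. The only cosmetic difference is in cases 1--2, where the paper deduces uniqueness by showing via (\ref{fmu}) that every nonzero critical point of $\varphi_g$ is a strict maximum, while you read it off the strict concavity of $\varphi_g'$ (a vertical translate of $\varphi_0'$); both rest on the same hypotheses and are equally valid.
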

\begin{proof}
1. If $\varphi_g'(\bar t)=0$, with $\bar t\neq 0$, by \ref{fmu}, we have that
\begin{equation}\label{phi2-g}
{\bar t}^2\varphi_g''(\bar t)=\bar t\int gu+ \int [\bar t u f'(\bar tu)-{\bar t}^2u^2 f''(\bar t u)]<0,
\end{equation}
so $\bar t$ is a maximum point for $\varphi_g$. Furthermore we have that $\varphi_g(0)=0$,
$\varphi_g'(0)>0$ and $\varphi_g''(0)>0$.

Using (\ref{fmu}) and (\ref{f1}), we have
\begin{eqnarray}
\nonumber
\varphi_g(t)&=&\frac{t^2}{2}||u||_V^2- \int_{\R^N} f(tu)-t\int_{\R^N} gu\leq\\
\label{phi-g}
&\leq&\frac{t^2}{2}||u||_V^2-t\int_{\R^N} gu-c_0t^q\int_{t|u|<1}|u|^q
-c_0t^q\int_{t|u|\geq 1}|u|^p\leq\\
\nonumber
&\leq&\frac{t^2}{2}||u||_V^2-t\int_{\R^N}gu-c_0t^p
\int_{|u|\geq 1}|u|^p\rightarrow-\infty\text{ when } t\rightarrow\infty,
\end{eqnarray}
because $p>2$. This proves that there is exactly one $t^1_u$ such that $t^1_u u\in\Ne_g$;
it is easy to see that $t_u^0<t^1_u$.

2. In this case, we can proof, as in (\ref{phi-g}) that $\varphi_g(t)\rightarrow -\infty$ when
$t\rightarrow\infty$ and that if $\bar t\neq 0$ is a critical point of $\varphi_g$ then (\ref{phi2-g})
holds. At last, consider that $0=\varphi_g(0)=\varphi_g'(0)<\varphi_g''(0)$, and so 0 is a
local minimum for $\varphi_g$, and we can conclude.

3. We have just proved that, for any $u\in\D$, we have an unique maximum point $T_u$ of $\varphi_0'(t)$.
So, if we prove that $\int gu<\varphi_0'(T_u)$ we have that there exist two numbers
$t_u^1$ and $t_u^2$ such that $\varphi_g'(t_u^j)=0$.
Set $L$ as in Lemma \ref{L}, and consider that
\begin{equation}
\int gu \leq ||g||_{L^{\frac{2N}{N+2}}}||u||_{L^{2^*}}\leq
C_1||g||_{L^{\frac{2N}{N+2}}}||u||_{\D}\leq C_2 ||g||_{L^{\frac{2N}{N+2}}}||u||_{V}.
\end{equation}
Recalling that $||u||_V=1$, if $||g||_{L^{\frac{2N}{N+2}}}$ is sufficiently small,
that is $C_2 ||g||_{L^{\frac{2N}{N+2}}}<L$,
we have exactly two positive numbers
$t_u^1$ and $t_u^2$ such that $\varphi_g'(t_u^j)=0$, and $t_u^1$ and $t_u^2$ are respectively the
maximum and the minimum point of $\varphi_g$

4. For Simplicity we only prove that $t^1_u(g)$ is a $C^1$ function. The other case is straightforward.
Let us define a function $G:\R^+\times\D\smallsetminus\{0\}\times L^{\frac{2N}{N+2}}\rightarrow \R$,
\begin{equation*}
G:(t,u,g)\mapsto \frac{d}{dt}\varphi_g^u(t)=t||u||^2_V-\int f'(tu)u-\int gu.
\end{equation*}
 We have that $G$ is a $C^1$ function. Let $\bar t,\bar u, \bar g$ be such that
$G(\bar t,\bar u, \bar g)=0.$ We know that
$\frac{\partial}{\partial t}G(\bar t,\bar u, \bar g)=\frac{d^2}{dt^2}\varphi_{\bar g}^{\bar u}(\bar t)<0$,
thus, by the
implicit function theorem there is a $C^1$ function $t(u,g)=t^1_u(g)$ such that $G(t(u,g),u,g)=0$.
We have then the claimed result.
\end{proof}

The Nehari manifold so can be described as:
\begin{equation}
\Ne_g^V=\Ne_{g,V}^+\cup \Ne_{g,V}^-,
\end{equation}
where
\begin{eqnarray*}
\Ne_g^+&=\Ne_{g,V}^+:=&\{ u\in\Ne_g^V\ :\ E_g'(u)u=0, E_g''(u)u^2>0\};\\
\Ne_g^-&=\Ne_{g,V}^-:=&\{ u\in\Ne_g^V\ :\ E_g'(u)u=0, E_g''(u)u^2<0\}.
\end{eqnarray*}
We have also that $E_g^V>0$ on $\Ne_g^-$ and $E_g^V<0$ on $\Ne_g^+$.
Furthermore, because $\Ne_0^V$ and $\V$ are bounded away from 0, we have also that
$\inf\limits_{u\in\Ne_g^-}||u||>0$. The geometry of $\Ne_g^V$ is represented in the following picture.

\begin{center}
\hskip 1.5cm
\includegraphics[scale=.7,angle=0]{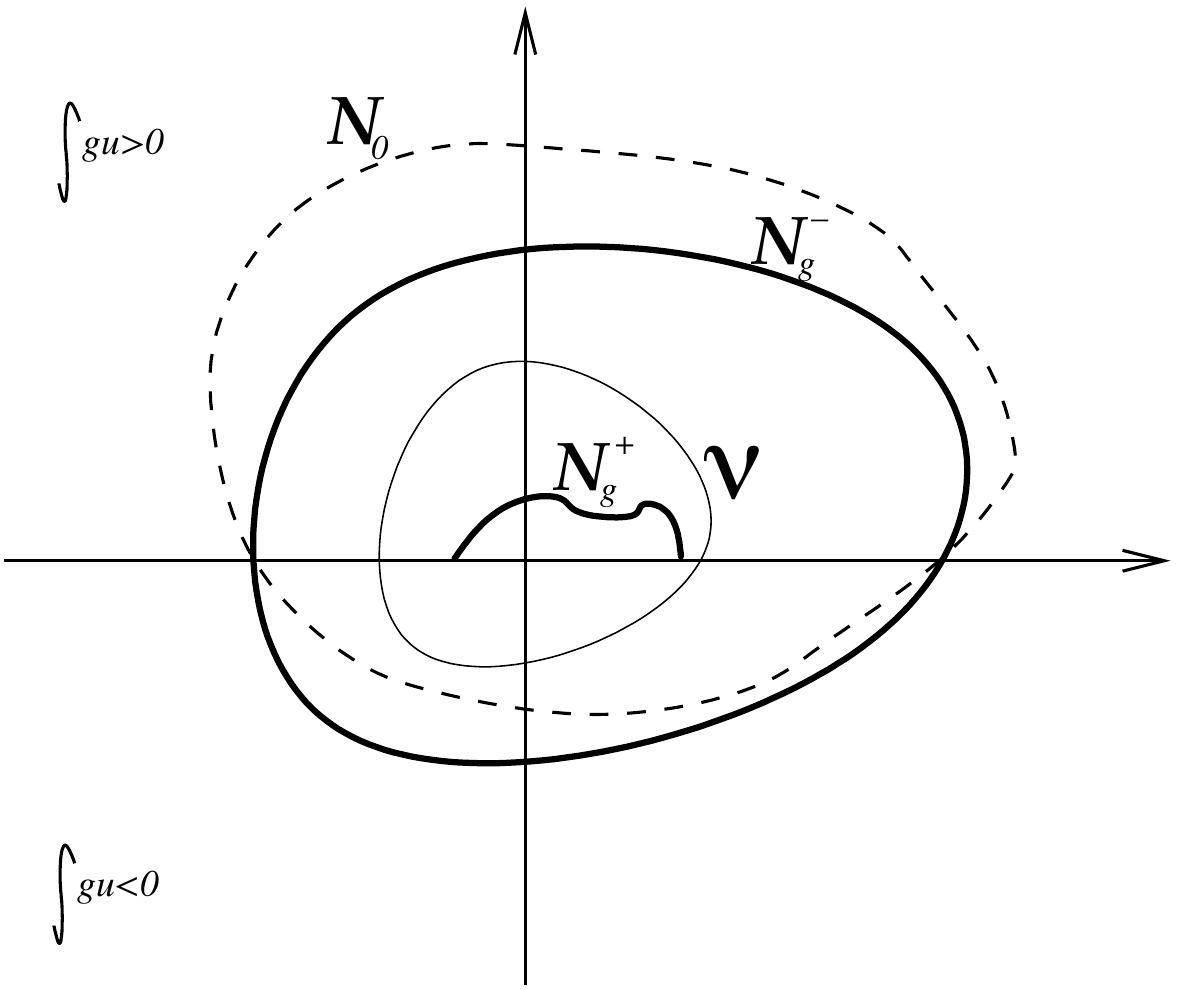}
\end{center}

\begin{rem}\label{u->0}
There exists $M>0$ such that
\begin{equation}
||u||_V\leq M||g||_{\frac{2N}{N+2}}\text { for any }u\in\Ne_g^+,
\end{equation}
indeed, by (\ref{fmu}) we have
\begin{displaymath}
\frac 12 ||u||^2_V<\int f(u)+\int gu\leq \frac 1{\mu_1}\int f'(u)u+\int gu=
\frac 1{\mu_1}||u||^2_V+\left(1- \frac 1{\mu_1}\right)\int gu,
\end{displaymath}
so
\begin{displaymath}
\left(\frac 12- \frac 1{\mu_1}\right)||u||_V^2<\left(1- \frac 1{\mu_1}\right)\int gu.
\end{displaymath}
\end{rem}
\section{The Splitting Lemma}

We recall that a sequence $\{u_n\}_n\in\D$ such that
$E_g^V(u_n)\rightarrow c$,
and
$\nabla E_g^V(u_n)\rightarrow0$
is a Palais-Smale sequence at level $c$ for $E_g^V$.

In the same way we say that $\{u_n\}_n\in\Ne_g^V$ such that
$E_g^V(u_n)\rightarrow c$,
and there exists a sequence $\eps_n\rightarrow0$ s.t.
$|\langle \nabla E_g^V(u_n),\varphi\rangle|\leq \eps_n||\varphi||$, for all
$\varphi\in T_{u_n}\Ne_g^V \cap\D$
is a Palais-Smale sequence at level $c$ for $E_g^V$ restricted to $\Ne_g^V$.

A functional $f$ satisfies the $(PS)_c$ condition if all the
Palais-Smale sequences at level $c$ converge.

Unfortunately the functional $E_g^V$ on $\Ne_g^V$ does not satisfy the {\em PS} condition in
all the energy range.
In this section by the splitting lemma we get a description of the {\em PS} sequences
for the functional $E_g^V$.
\begin{lemma}
Let $u_n\in\Ne_g$ and let $E_g^V(u_n)\rightarrow c$. Then $||u_n||_V$ is bounded.
\end{lemma}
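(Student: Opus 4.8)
The plan is to combine the Nehari constraint with the Ambrosetti--Rabinowitz-type inequality in $(f_\mu)$ to turn the bounded-energy hypothesis into a coercivity estimate for $||\cdot||_V$. Since $u_n\in\Ne_g$, the defining identity of the Nehari manifold gives
\[
||u_n||_V^2=\int_{\R^N}f'(u_n)u_n+\int_{\R^N}gu_n,
\]
and hence, using $0<\mu_1 f(s)\le f'(s)s$ from $(f_\mu)$,
\[
\int_{\R^N}f(u_n)\le\frac{1}{\mu_1}\int_{\R^N}f'(u_n)u_n=\frac{1}{\mu_1}\left(||u_n||_V^2-\int_{\R^N}gu_n\right).
\]

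Next I would substitute this bound into $E_g^V(u_n)=\frac12||u_n||_V^2-\int_{\R^N}f(u_n)-\int_{\R^N}gu_n$ to obtain
\[
E_g^V(u_n)\ge\left(\frac12-\frac{1}{\mu_1}\right)||u_n||_V^2-\left(1-\frac{1}{\mu_1}\right)\int_{\R^N}gu_n,
\]
where the coefficient $\frac12-\frac{1}{\mu_1}$ is a positive constant because $\mu_1>2$. To control the linear term I would use Hölder's inequality, the Sobolev embedding $\D(\R^N)\subset L^{2^*}$, and the equivalence of $||\cdot||_V$ with $||\cdot||$ guaranteed by (\ref{condV}):
\[
\left|\int_{\R^N}gu_n\right|\le||g||_{L^{\frac{2N}{N+2}}}||u_n||_{L^{2^*}}\le C\,||g||_{L^{\frac{2N}{N+2}}}\,||u_n||_V.
\]

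Putting these together and recalling that $E_g^V(u_n)\to c$, we arrive at an inequality of the form
\[
c+o(1)\ge\left(\frac12-\frac{1}{\mu_1}\right)||u_n||_V^2-C'||u_n||_V ,
\]
in which a quadratic lower bound is dominated by a linear term; this forces $\{||u_n||_V\}$ to be bounded, which is exactly the assertion. I do not expect a genuine obstacle here: this is the standard boundedness argument for double-power (AR-type) nonlinearities, and the only points needing a little care are keeping track of the sign of $\int_{\R^N}gu_n$ (handled by passing to the absolute value, since the quadratic term beats the linear one regardless of sign) and invoking the norm equivalence so that the Sobolev constant applies against $||u_n||_V$. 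Note that the argument uses neither the sign hypothesis on $V$ nor the smallness of $g$, only $(f_\mu)$, condition (\ref{condV}), and $g\in L^{\frac{2N}{N+2}}$.
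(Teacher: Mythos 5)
Your proof is correct and follows essentially the same route as the paper: the Nehari identity plus the inequality $\mu_1 f(s)\le f'(s)s$ yield the coercivity estimate $E_g^V(u_n)\ge\bigl(\tfrac12-\tfrac1{\mu_1}\bigr)||u_n||_V^2-\bigl(1-\tfrac1{\mu_1}\bigr)\int gu_n$, and the linear term is controlled by H\"older and Sobolev. The only cosmetic difference is that you conclude directly from the quadratic-versus-linear comparison, whereas the paper phrases the same comparison as a contradiction under the assumption $||u_n||_V\to\infty$.
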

\begin{proof}
We have that
\begin{equation}\label{PS}
||u_n||_V^2=\int f'(u_n)u_n+\int gu_n
\end{equation}
because $u_n\in \Ne_g^V$. Furthermore, for (\ref{fmu}) we have
\begin{eqnarray}
\nonumber
E_g^V(u_n)&=&\frac12 ||u_n||_V^2-\int f(u_n)-\int gu_n\geq \\
\nonumber
&\geq& \frac12 ||u_n||_V^2-\frac{1}{\mu_1}\int f'(u_n)u_n-\int gu_n=\\
\label{Eglimitata}
&=& \frac12||u_n||_V^2-\frac{1}{\mu_1}||u_n||_V^2+\frac{1}{\mu_1}\int gu_n-\int gu_n=\\
\nonumber
&=&\left( \frac12-\frac{1}{\mu_1}\right)||u_n||_V^2-\left(1-\frac{1}{\mu_1}\right)\int gu_n=\\
\nonumber
&=&||u_n||_V^2\left[ \left( \frac12-\frac{1}{\mu_1}\right)-
\left(1-\frac{1}{\mu_1}\right)\int g \frac{u_n}{||u_n||_V^2}\right].
\end{eqnarray}
If $||u_n||_V\rightarrow\infty$ we have that
\begin{equation}
\left|\int g \frac{u_n}{||u_n||_V^2}\right|\leq
||g||_{L^{\frac{2N}{N+2}}}\frac{||u||_{L^{2^*}}}{||u_n||_V^2}\leq
C||g||_{L^{\frac{2N}{N+2}}}\frac{1}{||u_n||_V}\rightarrow0.
\end{equation}
So we will have
\begin{equation}
C_1>E_g^V(u_n)\geq C_2||u_n||_V^2\rightarrow\infty
\end{equation}
that is a contradiction.
\end{proof}
\begin{lemma}\label{splitenergia}
Let $\{u_n\}_n\subset\Ne_g$, and let $E_g^V(u_n)\rightarrow c$. Then, up to subsequence
$u_n\rightharpoonup u_0$ in $\D$. Furthermore, setting $\psi_n=u_n-u_0$ we have
\begin{enumerate}
\item $||\psi_n||_V^2=||u_n||_V^2-||u_0||_V^2+o(1)$;
\item $E_g^V(\psi_n)=E_g^V(u_n)-E_g^V(u_0)+o(1)$.
\end{enumerate}
\end{lemma}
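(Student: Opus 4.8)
The plan is the following. By the previous lemma the sequence $\{u_n\}$ is bounded in $(\D,\|\cdot\|_V)$, hence bounded in $\D$, because $\|\cdot\|_V$ is equivalent to $\|\cdot\|$ thanks to \eqref{condV}; so, up to a subsequence, $u_n\rightharpoonup u_0$ in $\D$. Since $\{u_n\}$ is also bounded in $H^1(B_R)$ for every $R>0$, Rellich's theorem together with a diagonal argument lets us assume, along a further subsequence, $u_n\to u_0$ in $L^2_{\mathrm{loc}}(\R^N)$ and a.e.\ in $\R^N$. Item~1 is then immediate: $\D$ is a Hilbert space for the scalar product $\langle u,v\rangle_V=\int_{\R^N}(\nabla u\nabla v+Vuv)$, whose weak convergence coincides with that of $\D$ (equivalent norms), so that $\langle u_n,u_0\rangle_V\to\|u_0\|_V^2$ and
\[
\|\psi_n\|_V^2=\|u_n\|_V^2-2\langle u_n,u_0\rangle_V+\|u_0\|_V^2=\|u_n\|_V^2-\|u_0\|_V^2+o(1).
\]

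For item~2, write $E_g^V(\psi_n)=\tfrac12\|\psi_n\|_V^2-\int_{\R^N}f(\psi_n)-\int_{\R^N}g\psi_n$. The linear term splits exactly, $\int_{\R^N}g\psi_n=\int_{\R^N}gu_n-\int_{\R^N}gu_0$, and $\tfrac12\|\psi_n\|_V^2$ splits by item~1; hence the claim is equivalent to the Brezis--Lieb type identity $\int_{\R^N}\bigl[f(u_0+\psi_n)-f(\psi_n)-f(u_0)\bigr]\,dx=o(1)$. To establish it I would follow the Brezis--Lieb lemma in the Orlicz framework used in the paper. Set $\Phi(s)=|s|^p$ for $|s|>1$ and $\Phi(s)=|s|^q$ for $|s|\le1$. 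From \eqref{f1} and \eqref{f2} one gets the double power bounds $|f(s)|\le C\Phi(s)$ and $|f'(s)|\le C|s|^{-1}\Phi(s)$, and moreover $\Phi$ is nondecreasing in $|s|$ with $\Phi(a+b)\le C(\Phi(a)+\Phi(b))$. Using these facts, the mean value theorem and Young's inequality, one checks the pointwise estimate
\[
|f(a+b)-f(a)-f(b)|\le\eps\,\Phi(a)+C_\eps\,\Phi(b)\qquad\text{for all }a,b\in\R,\ \eps>0,
\]
where the case $|a|<|b|$ is trivial (there $\Phi(a)\le\Phi(b)$) and the only computational point is the range $|a|>1\ge|b|$.

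It then remains to run the standard truncation argument. Put $h_n=|f(u_0+\psi_n)-f(\psi_n)-f(u_0)|$, so that $h_n\to0$ a.e. Since $\{\psi_n\}$ is bounded in $\D\subset L^p+L^q$ (Remark~\ref{emb}), Lemma~\ref{lp+lq}(2) yields $\sup_n\int_{\R^N}\Phi(\psi_n)<\infty$, while $\int_{\R^N}\Phi(u_0)<\infty$ because $u_0\in\D\subset L^p+L^q$. Applying the pointwise estimate with $a=\psi_n$, $b=u_0$ we get $0\le(h_n-\eps\Phi(\psi_n))^+\le C_\eps\Phi(u_0)\in L^1(\R^N)$, and $(h_n-\eps\Phi(\psi_n))^+\to0$ a.e.; hence $\int_{\R^N}(h_n-\eps\Phi(\psi_n))^+\to0$ by dominated convergence, so $\limsup_n\int_{\R^N}h_n\le\eps\sup_n\int_{\R^N}\Phi(\psi_n)$, and letting $\eps\to0$ gives item~2. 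The technical heart of the proof is exactly this last part --- the Orlicz version of the Brezis--Lieb splitting for the double power nonlinearity, i.e.\ the pointwise inequality above coupled with the uniform bound on $\int_{\R^N}\Phi(\psi_n)$ supplied by Lemma~\ref{lp+lq}; the weak and a.e.\ compactness and item~1 are routine.
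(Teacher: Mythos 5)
Your argument is correct, but for the nonlinear term it follows a genuinely different route from the paper's. The paper splits $\int f(u_n)-\int f(u_0)-\int f(\psi_n)$ spatially over $B_R$ and $B_R^C$, applies the mean value theorem on each piece, and then disposes of the $B_R^C$ terms through the Orlicz--H\"older machinery of Lemma \ref{lp+lq} (using that $\|u_0\|_{L^p+L^q(B_R^C)}$ is small for $R$ large and that $f'$ maps $L^p+L^q$ into $L^{p'}\cap L^{q'}$), and of the $B_R$ terms through the local compactness $\psi_n\to0$ in $L^p(B_R)$. You instead run a Brezis--Lieb scheme: a.e.\ convergence of $\psi_n$ (Rellich plus a diagonal subsequence), the uniform bound $\sup_n\int\Phi(\psi_n)<\infty$ supplied by Lemma \ref{lp+lq}(2), the pointwise estimate $|f(a+b)-f(a)-f(b)|\le\eps\Phi(a)+C_\eps\Phi(b)$, and one application of dominated convergence to $(h_n-\eps\Phi(\psi_n))^+$; your item 1 and the exact splitting of $\int g\psi_n$ coincide with the paper's. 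Both proofs are sound. What each buys: your route needs no spatial truncation and no tail control on $u_0$, only a.e.\ convergence, and packages the whole splitting in one limit; its entire burden is the pointwise inequality, whose only delicate range is indeed $|a|\ge1\ge|b|$, where it does hold and deserves to be written out --- if $|b|\le\eps|a|$ then $|b|\,|a|^{p-1}\le\eps|a|^p=\eps\Phi(a)$, while if $|b|>\eps|a|\ge\eps$ then $|a|^{p-1}\le\eps^{-(p-1)}\le\eps^{-(p+q-2)}|b|^{q-1}$, so $|b|\,|a|^{p-1}\le C_\eps|b|^q=C_\eps\Phi(b)$ --- this is exactly where the double-power structure ($q>p$, $|b|\le 1$ forcing $\Phi(b)=|b|^q\le|b|^p$) must be handled, and your sketch correctly identifies it. The paper's route avoids proving any new pointwise inequality by recycling the already-established mapping properties between $L^p+L^q$ and $L^{p'}\cap L^{q'}$, at the price of the $B_R$/$B_R^C$ bookkeeping and of invoking strong local convergence rather than mere a.e.\ convergence.
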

\begin{proof}
By the previous lemma we have that $||u_n||_{\D}$ is bounded. Then
$u_n\rightharpoonup u_0$ and we have that
$$||\psi_n||_V^2=||u_n||_V^2-||u_0||_V^2+o(1).$$
Furthermore, we have that
\begin{equation}
\int f(\psi_n)=\int f(u_n)-\int f(u_0)+o(1).
\end{equation}
Indeed, we have the following equation, where $\tau,\theta,\sigma\in(0,1)$
\begin{eqnarray*}
&&\int f(u_n)-\int f(u_0)-\int f(\psi_n)=\\
&=&\int\limits_{B_R} f(u_0+\psi_n)- f(u_0)-\int\limits_{B_R^C} f(u_0)+\\
&&+\int\limits_{B_R^C} f(u_0+\psi_n)- f(\psi_n)-\int\limits_{B_R} f(\psi_n)=\\
&=&\int\limits_{B_R} f'(u_0+\tau\psi_n)\psi_n-\int\limits_{B_R^C} f(u_0)+
\int\limits_{B_R^C} f'(\theta u_0+\psi_n)u_0-\int\limits_{B_R} f'(\sigma\psi_n)\psi_n.
\end{eqnarray*}
Using Lemma \ref{lp+lq} we have that the terms in $B_R^C$ are arbitrarily small when $R$ is
sufficiently large. Furthermore, since $\psi_n\rightarrow0$ in $L^p(\Omega)$
for all $\Omega\subset \R^N$ bounded and for all
$p<2^*$, we get that
$$ \int f(u_n)-\int f(u_0)-\int f(\psi_n)\rightarrow0.$$
The proof follows easily.
\end{proof}
\begin{lemma}\label{gpsi}
Suppose that $\psi_n\rightharpoonup0$ in $\D$. Then we have
\begin{eqnarray}
\int V\psi_n^2\rightarrow0&&\\
\int g\psi_n\rightarrow0&&
\end{eqnarray}
\end{lemma}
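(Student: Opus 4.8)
The plan is to handle the two limits separately, in each case splitting the integral over $\R^{N}$ into a contribution on a large ball $B_{R}$ and one on its complement $B_{R}^{C}$, using the global integrability of $V$ (resp.\ $g$) to kill the tail uniformly in $n$ and compact Sobolev embeddings on the ball to kill the inner part as $n\to\infty$. First note that, since $\psi_{n}\rightharpoonup 0$ in $\D$, the sequence is bounded in $\D$, hence bounded in $L^{2^{*}}$ by the Sobolev inequality; write $\|\psi_{n}\|_{L^{2^{*}}}\le C_{0}$ for all $n$.

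For the second statement this is immediate: $g\in L^{\frac{2N}{N+2}}=(L^{2^{*}})'$, and the embedding $\D\hookrightarrow L^{2^{*}}$ is a bounded linear map, hence weak-to-weak continuous, so $\psi_{n}\rightharpoonup 0$ in $L^{2^{*}}$; pairing with $g$ gives $\int g\psi_{n}\to 0$. (If one prefers a splitting argument matching the style of Lemma \ref{splitenergia}: on $B_{R}^{C}$ one has $|\int_{B_{R}^{C}}g\psi_{n}|\le\|g\|_{L^{2N/(N+2)}(B_{R}^{C})}C_{0}$, small for $R$ large, while on the fixed ball $B_{R}$ one uses $g\in L^{s}$ with $s>\frac{2N}{N+2}$, so $s'<2^{*}$ and $\psi_{n}\to 0$ strongly in $L^{s'}(B_{R})$ by Rellich--Kondrachov.)

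For the first statement, fix $\eps>0$. On the complement of a ball, Hölder's inequality with exponents $\frac N2$ and $\frac{2^{*}}{2}=\frac{N}{N-2}$ gives
\[
\int_{B_{R}^{C}}|V|\psi_{n}^{2}\le\|V\|_{L^{N/2}(B_{R}^{C})}\,\|\psi_{n}\|_{L^{2^{*}}}^{2}\le C_{0}^{2}\,\|V\|_{L^{N/2}(B_{R}^{C})},
\]
and since $V\in L^{N/2}(\R^{N})$ we may fix $R$ so large that the right-hand side is $<\eps/2$, uniformly in $n$. On the now fixed ball $B_{R}$ we use instead the hypothesis $V\in L^{t}$ with $t>\frac N2$: Hölder's inequality with exponents $t$ and $t'$ yields
\[
\int_{B_{R}}|V|\psi_{n}^{2}\le\|V\|_{L^{t}(B_{R})}\,\|\psi_{n}\|_{L^{2t'}(B_{R})}^{2}.
\]
Since $t>\frac N2$ we have $t'<\frac{N}{N-2}$, hence $2t'<2^{*}$, so the embedding $H^{1}(B_{R})\hookrightarrow L^{2t'}(B_{R})$ is compact; as $\psi_{n}\rightharpoonup 0$ in $H^{1}(B_{R})$ this forces $\psi_{n}\to 0$ strongly in $L^{2t'}(B_{R})$, and the quantity above tends to $0$ as $n\to\infty$. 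Choosing $n$ large so that it is $<\eps/2$ and adding the two estimates gives $|\int V\psi_{n}^{2}|\le\int|V|\psi_{n}^{2}<\eps$.

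The only point that really needs care is the bookkeeping of exponents: the critical bound $V\in L^{N/2}$ controls the tail (where no compactness is available) but by itself gives no decay in $n$, whereas the extra local integrability $V\in L^{t}$ with $t>N/2$ is exactly what makes the relevant Sobolev exponent $2t'$ subcritical, so that Rellich--Kondrachov delivers strong convergence on bounded sets. Everything else is a routine $\eps/2$ argument.
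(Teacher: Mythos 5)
Your proof is correct. For the term $\int V\psi_n^2$ you argue exactly as the paper does: split into $B_R$ and $B_R^C$, control the tail by $\|V\|_{L^{N/2}(B_R^C)}\|\psi_n\|_{L^{2^*}}^2$ using only the boundedness of $\psi_n$ in $L^{2^*}$, and use $V\in L^t$ with $t>N/2$ together with the compactness of the embedding into $L^{2t'}(B_R)$ (the paper phrases this as $\psi_n\to0$ in $L^p(\Omega)$ for bounded $\Omega$ and $p<2^*$); your explicit $\eps/2$ bookkeeping is the standard way to make the paper's ``$\to0$'' rigorous. For the term $\int g\psi_n$ you take a genuinely shorter route: since the Sobolev embedding $\D\hookrightarrow L^{2^*}$ is bounded linear, hence weak-to-weak continuous, $\psi_n\rightharpoonup0$ in $L^{2^*}$ and pairing with $g\in L^{2N/(N+2)}=(L^{2^*})'$ gives the limit at once. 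This buys something concrete: the conclusion for $g$ needs no splitting and, in particular, does not use the extra hypothesis $g\in L^s$ with $s>\frac{2N}{N+2}$ that the paper's proof invokes on $B_R$; the paper's splitting (which you also sketch) is the more pedestrian argument but runs parallel to the one it must use for $V$. Your closing remark is also the right diagnosis of why the two terms are not symmetric: the map $u\mapsto u^2$ is not weakly continuous into $L^{2^*/2}$, so the quadratic term cannot be dispatched by duality alone and genuinely requires the subcritical local integrability $V\in L^t$, $t>N/2$, plus Rellich--Kondrachov, exactly as in the paper.
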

\begin{proof}
Again we use that $\psi_n\rightarrow0$ in $L^p(\Omega)$
for all $\Omega\subset \R^N$ bounded and for all
$p<2^*$. We have that
\begin{eqnarray*}
\int V\psi_n^2&=&\int\limits_{B_R} V\psi_n^2+\int\limits_{\R^N\smallsetminus B_R}V\psi_n^2\leq
||V||_{L^t(B_R)}||\psi_n||^2_{L^{2t'}(B_R)}+\\
&&+
||V||_{L^{N/2}(\R^N\smallsetminus B_R)}||\psi_n||^2_{L^{2^*}(\R^N\smallsetminus B_R)}\rightarrow 0,
\end{eqnarray*}
and that
\begin{eqnarray*}
\int g\psi_n&=&\int\limits_{B_R} g\psi_n +\int\limits_{\R^N\smallsetminus B_R}g\psi_n\leq
||g||_{L^s(B_R)}||\psi_n||_{L^{s'}(B_R)}+\\
&&+
||g||_{L^{\frac{2N}{N+2}}(\R^N\smallsetminus B_R)}
||\psi_n||_{L^{2^*}(\R^N\smallsetminus B_R)}\rightarrow 0.
\end{eqnarray*}
\end{proof}

\begin{lemma}\label{splitting}
Let $\{u_n\}_n$ a PS sequence at level $c$
for the functional $E_g^V$ restricted to the manifold $\Ne_g^V$.
Then, up to a subsequence, there exist $k$ sequences of points $\{y_n^j\}_n$, $j=1,\dots k$,
with $|y_n^j|\rightarrow\infty$,
a solution $u^0$ of the problem $-\Delta u +Vu=f'(u)+g$,
and $k$ solutions $u^j$, $j=1,\dots k$, of the problem
$-\Delta u =f'(u)$ such that
\begin{eqnarray}
u_n(x)&=&u^0(x)+\sum\limits_{j=1}^{k}u^j(x-y_n^j)+o(1);\\
E_g^V(u_n)&=&E_g^V(u^0)+\sum\limits_{j=1}^{k}E_0^0(u^j)+o(1).
\end{eqnarray}
\end{lemma}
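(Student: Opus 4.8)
The strategy follows the now-classical concentration–compactness / iterative-rescaling argument of Struwe, adapted to the Orlicz setting $L^p+L^q$ in which the nonlinearity lives. The starting data are: a Palais–Smale sequence $\{u_n\}\subset\Ne_g^V$ at level $c$; by the two preliminary lemmas of this section $\|u_n\|_V$ is bounded, so up to a subsequence $u_n\rightharpoonup u^0$ weakly in $\D$. The first task is to upgrade ``PS on $\Ne_g^V$'' to ``PS in $\D$'': using that $T_{u_n}\Ne_g^V$ has codimension one, one writes the free gradient $\nabla E_g^V(u_n)=\lambda_n \Phi_g'(u_n)+r_n$ where $\Phi_g(u)=E_g'(u)u$ is the Nehari constraint functional and $r_n\to0$; the multiplier $\lambda_n$ is controlled because $\langle\Phi_g'(u_n),u_n\rangle=E_g''(u_n)u_n^2$ is bounded away from $0$ on the relevant branch (this is exactly what Lemma \ref{L} and Proposition \ref{infV} provide, via the quantities $L$ and $\inf_\V\|w\|_V^2$), and since $\langle\nabla E_g^V(u_n),u_n\rangle=0$ on $\Ne_g^V$ one gets $\lambda_n\to0$, hence $\nabla E_g^V(u_n)\to0$ strongly in $\D'$. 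So from now on $\{u_n\}$ is an honest (PS)$_c$ sequence for the unconstrained functional.

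Next I would run the iteration. Setting $\psi_n^1=u_n-u^0$, the weak limit $u^0$ solves $-\Delta u+Vu=f'(u)+g$ (pass to the limit in $\nabla E_g^V(u_n)\to0$, using continuity of $f'$ as a map $L^p+L^q\to L^{p'}\cap L^{q'}$ from Lemma \ref{lp+lq}(3) and the Brezis–Lieb-type splittings in Lemmas \ref{splitenergia} and \ref{gpsi}), and one obtains
\begin{equation*}
\psi_n^1\rightharpoonup0 \text{ in }\D,\quad \nabla E_0^0(\psi_n^1)\to0 \text{ in }\D',\quad E_0^0(\psi_n^1)\to c-E_g^V(u^0),
\end{equation*}
where the drop from $E_g^V$ to the ``limit functional'' $E_0^0$ is because $\int V(\psi_n^1)^2\to0$ and $\int g\psi_n^1\to0$ by Lemma \ref{gpsi}, together with the splitting of $\int f(\cdot)$ from Lemma \ref{splitenergia}. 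If $\psi_n^1\to0$ strongly we stop with $k=0$. Otherwise $\limsup\|\psi_n^1\|_V>0$ and one must show the profile does not vanish: a vanishing lemma (in the style of Lions) shows that if $\sup_y\int_{B_1(y)}|\psi_n^1|^{2}\to0$ then $\psi_n^1\to0$ in $L^p+L^q$ and hence $\int f(\psi_n^1)\to0$, forcing $\|\psi_n^1\|_V\to0$ from the equation — contradiction. So there are $y_n^1$ with $\int_{B_1(y_n^1)}|\psi_n^1|^2\geq\delta>0$; necessarily $|y_n^1|\to\infty$ since $\psi_n^1\rightharpoonup0$. Then $v_n^1(\cdot)=\psi_n^1(\cdot+y_n^1)$ is bounded in $\D$, $v_n^1\rightharpoonup u^1\neq0$, and using translation invariance of $E_0^0$ together with $\|V(\cdot+y_n^1)\|_{L^{N/2}(B_R)}\to0$ one checks $u^1$ solves $-\Delta u=f'(u)$. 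Subtract the translated bump: $\psi_n^2=\psi_n^1-u^1(\cdot-y_n^1)$, and the Brezis–Lieb splitting (again via Lemma \ref{lp+lq} and the argument of Lemma \ref{splitenergia}) gives $\|\psi_n^2\|_V^2=\|\psi_n^1\|_V^2-\|u^1\|_V^2+o(1)$ and $E_0^0(\psi_n^2)=E_0^0(\psi_n^1)-E_0^0(u^1)+o(1)$, with $\psi_n^2$ still (PS) for $E_0^0$.

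Finally, termination: each extracted solution $u^j$ of $-\Delta u=f'(u)$ is nontrivial, hence lies on $\Ne_0^0$, hence $E_0^0(u^j)\geq m_0>0$ by Proposition \ref{infV}/Lemma \ref{L} applied with $V\equiv g\equiv0$; since the energies are nonincreasing along the iteration and bounded below (the leftover $E_0^0(\psi_n^{k+1})\to 0^+$, again because a small $L^p+L^q$-norm profile would have to vanish), after finitely many steps $k$ the remainder $\psi_n^{k+1}\to0$ strongly in $\D$. Re-expanding telescopically yields $u_n=u^0+\sum_{j=1}^k u^j(\cdot-y_n^j)+o(1)$ in $\D$ and $E_g^V(u_n)=E_g^V(u^0)+\sum_{j=1}^k E_0^0(u^j)+o(1)$. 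The main obstacle, and the only place real care is needed, is the Orlicz bookkeeping: all the ``Brezis–Lieb'' splittings of $\int f$, the vanishing lemma, and the weak continuity of $f'$ must be carried out in $L^p+L^q$ rather than a single $L^r$, which is precisely why Lemmas \ref{lp+lq} and \ref{stimef} were assembled in Section 2; the separation $\|y_n^i-y_n^j\|\to\infty$ for distinct indices and the passage to the limit in the rescaled equations with the decaying potential $V$ are then routine.
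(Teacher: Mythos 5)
Your proposal is correct and follows essentially the same route as the paper: a Struwe-type iteration in which the weak limit $u^0$ solves the perturbed equation, the remainders are translated to extract nontrivial solutions of $-\Delta u=f'(u)$, the energies split via the Brezis--Lieb-type lemmas in $L^p+L^q$ (Lemmas \ref{lp+lq}, \ref{splitenergia}, \ref{gpsi}), and the iteration stops because each bubble costs at least $m_0>0$. The only differences are cosmetic: you sketch the Lagrange-multiplier upgrade from constrained to free PS sequences (which the paper simply asserts) and you invoke a Lions-type vanishing lemma where the paper cites Step 3 of \cite[Lemma 3.3]{BGM04} for the non-vanishing of the translated profiles.
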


\begin{proof}
Since $u_n$ is a {\em PS} sequence for the functional $E_g^V$ restricted to the manifold
$\Ne_g^V$, then $u_n$ is a
{\em PS} sequence for the functional $E_g^V$. By the Lemma \ref{splitenergia} we have that
$u_n$ converges to $u^0$ weakly in $\D$ (up to subsequence), so, given $\varphi\in C^\infty_0(\R^N)$,
\begin{equation}
\lim_{n\rightarrow\infty}\int\nabla u_n\nabla\varphi+Vu_n\varphi-f'(u_n)\varphi-g\varphi=0.
\end{equation}
It is easy to see that
\begin{displaymath}
\int\nabla u_n\nabla\varphi+Vu_n\varphi\rightarrow
\int\nabla u^0\nabla\varphi+Vu^0\varphi.
\end{displaymath}
Arguing as in Step 1 of \cite[Lemma 3.3]{BGM04} we get also that, for some $0<\theta<1$,
\begin{equation}\label{f'(un)}
\int [f'(u_n)-f'(u^0)]\varphi=
\int_{\text{supp} \varphi}f''(\theta u_n+(1-\theta)u^0)(u_n-u^0)\varphi
\rightarrow 0,
\end{equation}
as $n\rightarrow0$, because
$u_n-u^0\rightarrow0$ in $L^p(\Omega)$, with $\Omega$ bounded and $p<2^*$.
So we have proved that
$u^0$ solves $-\Delta u +Vu=f'(u)+g$.

Now we set
\begin{equation*}
\psi_n(x)=u_n(x)-u^0(x).
\end{equation*}
Then $\psi_n\rightharpoonup0$ weakly in $\D$. If $\psi_n\nrightarrow0$ strongly in $\D$,
for Step 3 of \cite[Lemma 3.3]{BGM04} we have that there exists a sequence $\{y_n\}\subset \R^N$
with $|y_n|\rightarrow\infty$ such that $\psi_n(x+y_n)\rightarrow u^1$ in $\D$, and $u^1\neq0$.

Because $u^0$ is a weak solution of (\ref{PV}) and $u_n$ is a $PS$ sequence for $E_g^V$ we have
that, for any $\varphi\in C^\infty_0(\R^N)$,
\begin{eqnarray*}
&&\int\nabla u_n\nabla\varphi+Vu_n\varphi-f'(u_n)\varphi-g\varphi\rightarrow0;\\
&&\int\nabla u^0\nabla\varphi+Vu^0\varphi-f'(u^0)\varphi-g\varphi=0.
\end{eqnarray*}
So
\begin{equation}
\int\nabla \psi_n\nabla\varphi+V\psi_n\varphi-f'(u_n)-f'(u^0)\varphi\rightarrow0.
\end{equation}
Using (\ref{f'(un)}) we have that $\psi_n$ is a $PS$ sequence for the functional $E_0^V$.
Thus, for any $\varphi\in C^\infty_0(\R^N)$ we have

\begin{eqnarray*}
&&\int \nabla\psi_n(x+y_n)\nabla \varphi(x)-f'(\psi_n(x+y_n))\varphi(x)dx=\\
&&\int \nabla\psi_n(x)\nabla \varphi(x-y_n)-f'(\psi_n(x))\varphi(x-y_n)dx=\\
&&\int [f'(u_n)-f'(u^0)-f'(\psi_n)]\varphi(x-y_n)-\int V(x)\psi_n(x)\varphi(x-y_n)+o(1).
\end{eqnarray*}
Using the same argument of Lemma \ref{gpsi} we can prove that
\begin{equation*}
\int V(x)\psi_n(x)v(x-y_n)\leq C\varepsilon_n||\varphi||_{\D},\text{ with }\varepsilon_n\rightarrow0;
\end{equation*}
furthermore we have
\begin{eqnarray*}
&&\int [f'(u_n)-f'(u^0)-f'(\psi_n)]\varphi(x-y_n)=\\
&=&\int\limits_{B_R} [f'(u_0+\psi_n)-f'(u^0)]\varphi(x-y_n)+\\
&&+\int\limits_{B_R^C} [f'(u_0+\psi_n)-f'(\psi_n)]\varphi(x-y_n)-\\
&&-\int\limits_{B_R^C} f'(u^0)\varphi(x-y_n)+\int\limits_{B_R} f'(\psi_n)\varphi(x-y_n)\leq\\
&\leq&||[f''(u^0+\theta\psi_n)-f''(\theta\psi_n)]\varphi(\cdot-y_n)||_{L^{p'}(\R^N)}
||\psi_n||_{L^p(B_R)}+\\
&&+||[f''(\psi_n+\theta u^0)-f''(\theta u^0)]\varphi(\cdot-y_n)||_{L^{p'}\cap L^{q'}}
||u^0||_{L^p+L^q(B_R^C)},
\end{eqnarray*}
where $0<\theta<1$. Because $||u^0||_{L^p+L^q(B_R^C)}\rightarrow0$ for $R\rightarrow\infty$,
and given $R$ $||\psi_n||_{L^p(B_R)}\rightarrow0$ as $n\rightarrow0$, we have that
\begin{equation*}
\int \nabla\psi_n(x+y_n)\nabla \varphi(x)-f'(\psi_n(x+y_n))\varphi(x)dx\rightarrow 0.
\end{equation*}
At last, it is easy to see that
\begin{equation*}
\int \nabla\psi_n(x+y_n)\nabla v(x)-f'(\psi_n(x+y_n))v(x)dx\rightarrow
\int \nabla u^1\nabla v(x)-f'(u^1)v(x)dx,
\end{equation*}
so we have also proved that $u^1$ solves the problem $-\Delta u=f(u)$.

Set $\psi^2_n=\psi(x+y_n)-u^1$, we have that $\psi^2_n\rightharpoonup 0$, thus
\begin{eqnarray*}
E_g^V(u_n)-E_g^V(u^0)&=&E_g^V(\psi_n(x))+o(1)=E_0^0(\psi_n(x))+o(1)=\\
&=&E_0^0(\psi_n(x+y_n))+o(1)=E_0^0(u^1)+E_0^0(\psi^2_n)+o(1),
\end{eqnarray*}
by Lemma \ref{splitenergia}. So,
\begin{equation}
E_g^V(u_n)=E_g^V(u^0)+E_0^0(u^1)+E^0_0(\psi^2_n)+o(1).
\end{equation}
Now, if $\psi^2_n\rightarrow0$ strongly in $\D$, we have the claim,
otherwise we can proceed
by induction and conclude the proof in a finite number of steps.
\end{proof}

\section{Main Results}

We set
\begin{equation*}
m_g=\inf\limits_{u\in \Ne_g^V}E_g^V(u)\text{ and }m_{1,g}=\inf\limits_{u\in \Ne_g^-}E_g^V(u).
\end{equation*}
We show that there exist a solution with critical value $m_g$ and another solution with critical value $m_{1,g}$.

We set also
\begin{equation}
m_0=\inf\limits_{u\in \Ne_0^0}E_0^0(u)
\end{equation}
and we recall that there exists a positive radially symmetric function $\omega\in\Ne_0^0$ such that
\begin{equation}
E_0^0(\omega)=m_0>0.
\end{equation}
Finally, we set
\begin{equation}
m_V =\inf\limits_{u\in \Ne_0^V}E_0^V(u)
\end{equation}
We know, by \cite{BGM04}, that for any $V\leq 0$ and $V<0$ on a set of positive measure  there exists a
function $\bar u \in\Ne_0^V$ such that
\begin{equation}
E_0^V(\bar u)=m_V
\end{equation}
and
\begin{equation}
0<m_V<m_0.
\end{equation}
We prove the following results.
\begin{teo}\label{mainNeg+}
There exist a $u_g\in \Ne_g^+$ such that
$E_g^V(u_g)=m_g$. Furthermore, when $||g||_{L^\frac{2N}{N+2}}$ is small, $u_g$ is unique.

\end{teo}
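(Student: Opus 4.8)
The plan is to obtain $u_g$ by minimizing $E_g^V$ on the Nehari manifold $\Ne_g^V$, using Ekeland's principle to upgrade a minimizing sequence to a Palais--Smale sequence and the Splitting Lemma to recover compactness; uniqueness for small $g$ will then come from a contraction estimate on the difference of two solutions.

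First I would check that $m_g$ is negative and finite. Since $g\not\equiv0$, density of $C_0^\infty$ in $\D\hookrightarrow L^{2^*}$ gives $u$ with $\|u\|_V=1$ and $\int gu>0$, so by Proposition~\ref{gsmall}(3) there is $t_u^2>0$ with $t_u^2u\in\Ne_g^V$ and $\varphi_g''(t_u^2)>0$, i.e. $t_u^2u\in\Ne_g^+$; thus $\Ne_g^+\neq\emptyset$. Since $E_g^V<0$ on $\Ne_g^+$ and $E_g^V>0$ on $\Ne_g^-$, we get $m_g=\inf_{\Ne_g^+}E_g^V<0$; and by (\ref{fmu}) together with the Nehari identity, $E_g^V(u)\geq(\tfrac12-\tfrac1{\mu_1})\|u\|_V^2-(1-\tfrac1{\mu_1})\int gu\geq -C\|g\|_{\frac{2N}{N+2}}^2$ for every $u\in\Ne_g^V$, so $m_g>-\infty$.

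Next, apply Ekeland's variational principle to $E_g^V$ restricted to the $C^1$ manifold $\Ne_g^V$ (on a closed, hence complete, sublevel set) to get $\{u_n\}\subset\Ne_g^V$ which is a Palais--Smale sequence at level $m_g$ for $E_g^V$ constrained to $\Ne_g^V$. The Splitting Lemma~\ref{splitting} then yields, up to a subsequence, points $y_n^j$ with $|y_n^j|\to\infty$, a solution $u^0$ of $-\Delta u+Vu=f'(u)+g$ and nontrivial solutions $u^1,\dots,u^k$ of $-\Delta u=f'(u)$ with
\[
E_g^V(u_n)=E_g^V(u^0)+\sum_{j=1}^kE_0^0(u^j)+o(1),\qquad u_n=u^0+\sum_{j=1}^ku^j(\cdot-y_n^j)+o(1).
\]
Each $u^j$ ($j\geq1$) lies in $\Ne_0^0$, so $E_0^0(u^j)\geq m_0>0$. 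If $u^0=0$ this forces $m_g=\sum_{j=1}^kE_0^0(u^j)\geq0$, contradicting $m_g<0$; hence $u^0\neq0$, $u^0\in\Ne_g^V$, $E_g^V(u^0)\geq m_g$, and then $m_g=E_g^V(u^0)+\sum_{j=1}^kE_0^0(u^j)\geq m_g+k\,m_0$ forces $k=0$. So $u_n\to u^0$ strongly in $\D$ and $E_g^V(u^0)=m_g$; putting $u_g:=u^0$, since $E_g^V(u_g)=m_g<0$ while $E_g^V>0$ on $\Ne_g^-$ we conclude $u_g\in\Ne_g^+$. This step is the main obstacle: the failure of the $(PS)$ condition is reduced by the Splitting Lemma to finitely many rigid bubbles at infinity, each costing at least $m_0>0$, which the strict gap $m_g<0<m_0$ rules out.

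Finally, for uniqueness let $u_1,u_2\in\Ne_g^+$ realize $m_g$. A standard Lagrange-multiplier argument (the multiplier pairs with $u_i$ to give $0=\lambda_i\langle\nabla\Phi(u_i),u_i\rangle=\lambda_i E_g''(u_i)[u_i,u_i]$ with $E_g''(u_i)u_i^2>0$ on $\Ne_g^+$, so $\lambda_i=0$) shows each $u_i$ solves $-\Delta u_i+Vu_i=f'(u_i)+g$, and by Remark~\ref{u->0}, $\|u_i\|_V\leq M\|g\|_{\frac{2N}{N+2}}=:\delta$. Subtracting the equations and testing with $u_1-u_2$ gives, with $\xi$ pointwise between $u_1$ and $u_2$,
\[
\|u_1-u_2\|_V^2=\int_{\R^N}f''(\xi)(u_1-u_2)^2 .
\]
The double-power bounds (\ref{f2}) give $|f''(s)|\leq c_2|s|^{2^*-2}$ for all $s$ (on $\{|s|\leq1\}$ since $q-2>2^*-2$, on $\{|s|>1\}$ since $p-2<2^*-2$), so by Hölder with exponents $N/2$ and $N/(N-2)$ and Sobolev,
\[
\int_{\R^N}f''(\xi)(u_1-u_2)^2\leq c_2\|\xi\|_{L^{2^*}}^{2^*-2}\|u_1-u_2\|_{L^{2^*}}^2\leq C\delta^{2^*-2}\|u_1-u_2\|_V^2 ,
\]
using $\|\xi\|_{L^{2^*}}\leq\|u_1\|_{L^{2^*}}+\|u_2\|_{L^{2^*}}\leq C'\delta$. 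Choosing $\|g\|_{\frac{2N}{N+2}}$ small enough that $C\delta^{2^*-2}<1$ forces $u_1=u_2$. The only computational point is this last estimate, where the double-power structure of $f$ and the smallness of $\|u_i\|_V$ on $\Ne_g^+$ are exactly what make the comparison a contraction.
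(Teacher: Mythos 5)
Your proposal is correct and follows essentially the same route as the paper: finiteness and negativity of $m_g$, Ekeland plus the Splitting Lemma with the gap $m_g<0<m_0$ to kill the bubbles and get a minimizer in $\Ne_g^+$, and uniqueness from the identity $||u_1-u_2||_V^2=\int f''(\xi)(u_1-u_2)^2$ combined with the smallness of elements of $\Ne_g^+$ from Remark \ref{u->0}. Your only (harmless) deviations are cosmetic: you bound $m_g$ from below directly by a quadratic in $||u||_V$ instead of the paper's contradiction argument, and in the uniqueness step you use the elementary pointwise bound $|f''(s)|\leq c_2|s|^{2^*-2}$ with H\"older--Sobolev rather than the paper's appeal to Lemma \ref{stimef} and interpolation.
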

\begin{proof}
By definition of $\Ne_g^+$ we have that $m_g=\inf\limits_{u\in \Ne_g^+}E_g^V(u)$,
and that $m_g<0$.
At first we prove that $m_g>-\infty$. By contradiction, suppose that there exist a sequence
$t_n>0$ and a sequence $\{v_n\}_n\subset \D$ with $||v_n||_V=1$ and $t_nv_n\in\Ne_g^+$ such that
\begin{equation}
E_g^V(t_nv_n)=\frac{t_n^2}2-\int f(t_nv_n)-t_n\int gv_n\rightarrow-\infty.
\end{equation}
We have also that $\displaystyle t_n^2-\int f'(t_nv_n)t_nv_n-t_n\int g v_n=0$. So,
if $t_n$ is bounded, we have
\begin{eqnarray*}
E_g^V(t_nv_n)&=&-\frac{t_n^2}{2}+\int f'(t_nv_n)t_nv_n-\int f(t_nv_n)\geq\\
&\geq&-\frac{t_n^2}{2}+\left(1-\frac 1{\mu_1}\right)\int f'(t_nv_n)t_nv_n
\end{eqnarray*}
that is bounded by Lemma \ref{lp+lq}. Thus we have that, up to subsequence, $t_n\rightarrow+\infty$.
Finally, arguing as in (\ref{Eglimitata}) we have that
\begin{equation}
E_g^V(t_nv_n)\geq
\left(\frac 12 -\frac 1{\mu_1}\right) t_n^2-\left(1 -\frac 1{\mu_1}\right)t_n\int gv_n\rightarrow+\infty,
\end{equation}
that is a contradiction.

Now, let $u_n$ a minimizing sequence. For the Ekeland variational principle, we can suppose $u_n$ be a
$PS$ sequence. For the splitting lemma there exists a $u_g \in \Ne_g^V$ and $k$ functions
$u^j$,
$1\leq j\leq k$ such that
\begin{equation}
E_g^V(u_n)\rightarrow E_g^V(u_g)+\sum_{j=1}^kE_0^0(u^j)=m_g<0.
\end{equation}
We know that $E_0^0(u^j)\geq m_0>0$ for all $j$. So, if $k>0$ we will have $E_g^V(u_n)\rightarrow m_g+\delta$ for
some $\delta>0$ and this is a contradiction.

So, we have
\begin{equation}
u_n\rightarrow u_g\text{ in }\D.
\end{equation}
Furthermore, we have $E_g^V(u_g)=m_g<0$, so $u_g\in\Ne_g^+$,
and this concludes the proof of the existence.

To prove uniqueness, we argue by contradiction. If $u_1,u_2$ are minimizers of $E^V_g$ on $\Ne_g^+$,
both $u_1$ and $u_2$ solve (\ref{PV}), so we have
\begin{equation*}
||u_1-u_2||_V^2=\int (f'(u_1)-f'(u_2))(u_1-u_2)=\int f''(\theta u_1+(1-\theta)u_2)(u_1-u_2)^2
\end{equation*}
with $0<\theta<1$. So
\begin{equation}
||u_1-u_2||_{L^{2^*}}^2\leq C||u_1-u_2||_V^2\leq C ||u_1-u_2||_{L^{2^*}}^2
||f''(\theta u_1+(1-\theta)u_2)||_{L^\frac{2^*}{2^*-2}}.
\end{equation}
By Remark \ref{u->0}, we have that, if $g\rightarrow0$ in $L^\frac{2N}{N+2}$, then both
$u_1$ and $u_2$ are small in $L^p+L^q$, so we have that $f''(\theta u_1+(1-\theta)u_2)\rightarrow0$ in
$L^{p/p-2}\cap L^{q/q-2}$ by Lemma \ref{stimef}, and, by interpolation,
$$||f''(\theta u_1+(1-\theta)u_2)||_{L^\frac{2^*}{2^*-2}}\rightarrow0,$$
that is a contradiction.
\end{proof}

\begin{prop}\label{upos}
Suppose that $g\geq 0$. Then there exists an $u_g\geq 0$ in $\Ne_g^+$ such that
$E_g^V(u_g)=m_g$.
\end{prop}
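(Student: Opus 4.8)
The plan is to reduce Proposition~\ref{upos} to the unconstrained minimization already carried out in Theorem~\ref{mainNeg+}, exploiting the fact that $g \geq 0$ is compatible with passing to absolute values.  First I would observe that the functional $E_g^V$ decreases (or does not increase) under the replacement $u \mapsto |u|$: indeed $\int_{\R^N} |\nabla |u||^2 = \int_{\R^N} |\nabla u|^2$, while $\int_{\R^N} V|u|^2 = \int_{\R^N} Vu^2$, $\int_{\R^N} f(|u|) = \int_{\R^N} f(u)$ since $f$ is even, and $\int_{\R^N} g|u| \geq \int_{\R^N} gu$ precisely because $g \geq 0$.  Hence $E_g^V(|u|) \leq E_g^V(u)$ for every $u \in \D$, with equality in the last term only when $gu = g|u|$ a.e.

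\medskip

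Second, I would transport this to the Nehari manifold.  Given any $u \in \Ne_g^+$, consider $|u| \in \D \smallsetminus \{0\}$.  Since $\int g|u| \geq \int gu$ and, by Proposition~\ref{gsmall}, the sign of $\int g|u|$ and the position of $1 \cdot |u|$ relative to the fibre dictate the structure of $\{t : t|u| \in \Ne_g^V\}$, there is a (unique, in the relevant cases) $t_{|u|}^1 > 0$ with $t_{|u|}^1 |u| \in \Ne_g^+$; moreover $\varphi_g^{|u|}(t_{|u|}^1) = \min_{t>0}\varphi_g^{|u|}(t) \le \varphi_g^{|u|}(1) = E_g^V(|u|) \le E_g^V(u)$, the first inequality because on $\Ne_g^+$ the value $t=1$ lies on the ``positive'' branch where $\varphi_g$ is decreasing past its local min only if... — more cleanly: $E_g^V(t_{|u|}^1|u|) = \inf_{t\in(0,t^2_{|u|})} \varphi_g^{|u|}(t)$ over the connected component containing the local minimum, which is $\le \varphi_g^{|u|}(1)$ whenever $u$ itself was already at a local minimum of its own fibre, i.e. $u\in\Ne_g^+$.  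Taking the infimum over $u \in \Ne_g^+$ shows $m_g = \inf_{u \in \Ne_g^+} E_g^V(u)$ is attained along nonnegative functions: applying the argument to the minimizer $u_g$ from Theorem~\ref{mainNeg+} gives $|u_g|$, rescaled to $t^1_{|u_g|}|u_g| \in \Ne_g^+$, with energy $\le m_g$, hence $= m_g$.  Rename this nonnegative minimizer $u_g \ge 0$.

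\medskip

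Third, I would promote the nonnegative minimizer to a genuine solution.  Since $u_g$ minimizes $E_g^V$ on $\Ne_g^+$ and $m_g < 0$ is attained in the interior (away from $\Ne_g^-$, because $E_g^V < 0$ on $\Ne_g^+$ and $E_g^V > 0$ on $\Ne_g^-$), the Lagrange multiplier rule on the $C^1$ manifold $\Ne_g^V$ yields $\nabla E_g^V(u_g) = \lambda\, \nabla J(u_g)$ where $J(u) = \langle \nabla E_g^V(u), u\rangle$ is the constraint; pairing with $u_g$ and using $u_g \in \Ne_g^+$ (so $\langle \nabla J(u_g), u_g\rangle = E_g''(u_g)u_g^2 \ne 0$) forces $\lambda = 0$.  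Thus $u_g$ solves $-\Delta u + Vu = f'(u) + g$ weakly.  This is exactly the computation underlying Theorem~\ref{mainNeg+}, so I would simply invoke it.

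\medskip

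The main obstacle is the second step: one must be careful that $|u|$, after rescaling onto $\Ne_g^+$, genuinely lands on the \emph{positive} branch $\Ne_g^+$ and not on $\Ne_g^-$, and that the rescaled energy does not exceed $E_g^V(|u|)$.  The clean way around this is not to rescale an arbitrary $u$ but to apply the symmetrization directly to the already-constructed minimizer $u_g \in \Ne_g^+$ of Theorem~\ref{mainNeg+}: then $\varphi_g^{u_g}$ has its local minimum at $t=1$ with negative value, $\varphi_g^{|u_g|} \le \varphi_g^{u_g}$ pointwise in $t$ (same quadratic and $f$ parts, smaller linear part), so $\min_t \varphi_g^{|u_g|}(t) \le \varphi_g^{|u_g|}(1) \le \varphi_g^{u_g}(1) = m_g$, and this minimum is attained at some $\bar t$ with $\bar t|u_g| \in \Ne_g^+$ (the branch of the \emph{minimum}, which is $\Ne_g^+$ by the very definition via $E_g''$); by minimality of $m_g$ the value equals $m_g$.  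The sign claim $g \ge 0 \Rightarrow \int g|u_g| \ge \int g u_g$ together with Proposition~\ref{gsmall} then guarantees $\bar t \le t^1_{|u_g|}$ is well defined and positive, closing the argument.
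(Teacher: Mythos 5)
Your overall strategy is the same as the paper's: take the minimizer $u_g\in\Ne_g^+$ from Theorem~\ref{mainNeg+}, pass to $|u_g|$ (which does not increase the energy, since $f$ is even and $g\geq0$), and then rescale $|u_g|$ back onto $\Ne_g^+$ at some $\bar t$, comparing energies. However, the decisive step is exactly the one you assert without proof: that the value of $\varphi_g^{|u_g|}$ at its \emph{local} minimum point $\bar t$ is $\leq\varphi_g^{|u_g|}(1)$. Writing ``$\min_t\varphi_g^{|u_g|}(t)\leq\varphi_g^{|u_g|}(1)$'' does not settle this, because $\varphi_g^{|u_g|}(t)\rightarrow-\infty$ as $t\rightarrow\infty$ (see (\ref{phi-g})), so the global infimum is $-\infty$ and is not attained on $\Ne_g^+$; what you need is the branch minimum, and comparing it with the value at $t=1$ requires locating $t=1$ with respect to the fibre map of $|u_g|$, which is a \emph{different} function from $\varphi_g^{u_g}$ (smaller linear term), so the fact that $t=1$ is the local minimum of $\varphi_g^{u_g}$ does not by itself place it on the correct branch of $\varphi_g^{|u_g|}$. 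Your closing remark ``$\bar t\leq t^1_{|u_g|}$ is well defined and positive'' only gives existence of the local minimum (via $\int g|u_g|\geq\int gu_g>0$ and Proposition~\ref{gsmall}), not the needed inequality; a priori $t=1$ could lie beyond the local maximum $t^1_{|u_g|}$, on the unbounded decreasing branch, where $\varphi_g^{|u_g|}(1)<0$ is perfectly possible while $\varphi_g^{|u_g|}(\bar t)>\varphi_g^{|u_g|}(1)$, and then the chain of inequalities breaks.

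The paper closes precisely this gap with a two-line derivative comparison at $t=1$: since $f'$ is odd, $f''$ is even and $g\geq0$,
\begin{equation*}
\frac{d}{dt}\varphi^{|u_g|}_g(1)=||u_g||^2_V-\int f'(|u_g|)|u_g|-\int g|u_g|
\leq\frac{d}{dt}\varphi^{u_g}_g(1)=0,
\qquad
\frac{d^2}{dt^2}\varphi^{|u_g|}_g(1)=\frac{d^2}{dt^2}\varphi^{u_g}_g(1)>0,
\end{equation*}
the last inequality because $u_g\in\Ne_g^+$. The positive second derivative puts $t=1$ strictly before the inflection point (hence before $t^1_{|u_g|}$), and together with $\frac{d}{dt}\varphi^{|u_g|}_g(1)\leq0$ it forces $\bar t\geq1$; since $\varphi_g^{|u_g|}$ is nonincreasing on $[0,\bar t]$, one gets $E_g^V(\bar t|u_g|)\leq E_g^V(|u_g|)\leq E_g^V(u_g)=m_g$, and minimality gives equality. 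Adding this computation (or the equivalent observation that the inflection point of the fibre map is the same for $u_g$ and $|u_g|$) repairs your argument; your third step (the Lagrange multiplier identity) is fine but not needed for the statement as such, since it is already contained in Theorem~\ref{mainNeg+}.
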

\begin{proof}
Take $u_g$ as in Theorem \ref{mainNeg+}.
Because $u_g\in \Ne_g^+$ we have that $\displaystyle \int gu_g>0$.
If $u_g$ changes sign, or $u_g$ negative, we have that
\begin{equation}
0<\int gu_g\leq\int g|u_g|.
\end{equation}
So, reminding that $f$ is even we have
\begin{eqnarray*}
E_g^V(|u_g|)&=&\frac 12||u_g||_V^2 -\int f(|u_g|)-\int g|u_g|\leq\\
&\leq&\frac 12||u_g||_V^2 -\int f(u_g)-\int gu_g=E_g^V(u_g).
\end{eqnarray*}
We know that there exists a $\tau$ such that $\tau |u_g|\in\Ne_g^+$.
Furthermore we know, by the study of $\varphi^{|u_g|}_g$ that $\tau$ is a local minimizer of
$\varphi^{|u_g|}_g$, in fact, $\varphi^{|u_g|}_g(\tau)\leq\varphi^{|u_g|}_g(t)$
for all $t \in [0,\tau]$.
We have
\begin{eqnarray*}
\frac{d}{dt}\varphi^{|u_g|}_g(1)&=&\frac{d}{dt}E_g^V(t|u_g|)_{|_{t=1}}=
||u_g||^2_V-\int f'(|u_g|)|u_g|-\int g|u_g|\leq\\
&\leq&||u_g||^2_V-\int f'(u_g)u_g-\int gu_g=\frac{d}{dt}E_g^V(tu_g)_{|_{t=1}}=0,
\end{eqnarray*}
and
\begin{eqnarray*}
\frac{d^2}{dt^2}\varphi^{|u_g|}_g(1)&=&\frac{d^2}{dt^2}E_g^V(t|u_g|)_{|_{t=1}}=
||u_g||^2_V-\int f''(|u_g|)|u_g|^2=\\
&=&||u_g||^2_V-\int f''(u_g)u_g^2=\frac{d^2}{dt^2}E_g^V(tu_g)_{|_{t=1}}>0.
\end{eqnarray*}
Thus $\tau\geq1$ and
\begin{equation}
E_g^V(\tau|u_g|)\leq E_g^V(|u_g|)\leq E_g^V(u_g)=m_g,
\end{equation}
that concludes the proof.
\end{proof}

We want to prove that, under suitable hypothesis on $g,f$ and $V$,
there exists another solution of \ref{PV}, by minimizing the
functional $E_g^V$ on $\Ne_g^-$.
In order to prove that a minimizing sequence converges
we will show that, for $g$ small,
\begin{equation}
m_{1,g}:=\inf\limits_{u\in\Ne_g^-}E_g^V(u)<m_g+m_0;
\end{equation}
\begin{lemma}\label{limsup}
Suppose that $V\leq0$ and $V<0$ on a set of positive measure.
If $||g||_{L^\frac{2N}{N-2}}$ sufficiently small,
then there exist a $\delta>0$ such that
\begin{equation}
m_{1,g}:=\inf\limits_{u\in\Ne_g^-}E_g^V(u)<m_0-\delta.
\end{equation}
Moreover,
\begin{equation}
\limsup\limits_{||g||_{L^\frac{2N}{N-2}}\rightarrow0}m_{1,g}\leq m_V
\end{equation}
\end{lemma}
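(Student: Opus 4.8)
The plan is to bound $m_{1,g}$ from above by evaluating $E_g^V$ at a single well-chosen point of $\Ne_g^-$, namely one built from the minimizer $\bar u$ of $E_0^V$ on $\Ne_0^V$, and then to let $g\to 0$ using the continuous dependence of the Nehari fiber maps on $g$ proved in Proposition~\ref{gsmall}. No compactness (hence no Splitting Lemma) should be needed for this one-sided estimate.

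First I would normalize: set $u_0=\bar u/\|\bar u\|_V$, so that $\|u_0\|_V=1$. Since $\bar u\in\Ne_0^V$ and, by Proposition~\ref{gsmall}, there is a unique scalar $t^0_{u_0}>0$ with $t^0_{u_0}u_0\in\Ne_0^V$, uniqueness forces $t^0_{u_0}=\|\bar u\|_V$, i.e.\ $t^0_{u_0}u_0=\bar u$. Next, for $\|g\|_{\frac{2N}{N+2}}$ small, Proposition~\ref{gsmall} (in each of its three cases, regardless of the sign of $\int g u_0$) provides a scalar $t^1_{u_0}(g)>0$ with $t^1_{u_0}(g)\,u_0\in\Ne_g^V$; since $t^1_{u_0}(g)$ is the largest critical point of $\varphi_g^{u_0}$ and lies to the right of the unique maximum of the strictly concave function $\varphi_0'$, one has $(\varphi_g^{u_0})''(t^1_{u_0}(g))<0$, that is $t^1_{u_0}(g)\,u_0\in\Ne_g^-$. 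Moreover, by part~4 of the same proposition, $t^1_{u_0}(g)\to t^0_{u_0}=\|\bar u\|_V$ as $\|g\|_{\frac{2N}{N+2}}\to 0$.

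Then I would estimate along this fiber point. Because $t^1_{u_0}(g)\,u_0\in\Ne_g^-$,
\[
m_{1,g}\ \le\ E_g^V\!\big(t^1_{u_0}(g)\,u_0\big)\ =\ \varphi_0^{u_0}\!\big(t^1_{u_0}(g)\big)-t^1_{u_0}(g)\int_{\R^N} g\,u_0 .
\]
The last term is bounded by $C\,t^1_{u_0}(g)\,\|g\|_{\frac{2N}{N+2}}\to 0$ (Hölder and Sobolev), while $\varphi_0^{u_0}\big(t^1_{u_0}(g)\big)\to\varphi_0^{u_0}\big(\|\bar u\|_V\big)=E_0^V(\bar u)=m_V$ by continuity of the scalar function $\varphi_0^{u_0}$. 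Hence $\limsup_{\|g\|_{\frac{2N}{N+2}}\to 0} m_{1,g}\le m_V$, which is the second assertion. For the first one, recall from \cite{BGM04} that $m_V<m_0$ — this is exactly where the hypotheses $V\le 0$ and $V<0$ on a set of positive measure enter, together with the existence of $\bar u$ — so, taking $\delta=(m_0-m_V)/2>0$, the displayed chain gives $m_{1,g}<m_0-\delta$ for every $g$ with $\|g\|_{\frac{2N}{N+2}}$ below a suitable threshold.

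I expect the only delicate bookkeeping to be the verification that the test point lands on the $\Ne_g^-$ branch for every small $g$ (handled above via the strict concavity of $\varphi_0'$) and that the convergence $t^1_{u_0}(g)\to\|\bar u\|_V$ from Proposition~\ref{gsmall}\,(4) is genuinely enough to pass to the limit in $\varphi_0^{u_0}$; everything else is a routine continuity argument.
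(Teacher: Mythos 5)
Your proposal is correct and follows essentially the same route as the paper: test $m_{1,g}$ on the fiber through the minimizer $\bar u$ of $E_0^V$ on $\Ne_0^V$, use Proposition~\ref{gsmall} to get $t^1(g)u_0\in\Ne_g^-$ with $t^1(g)\to t^0=\|\bar u\|_V$, pass to the limit to obtain $\limsup m_{1,g}\le m_V$, and conclude with $m_V<m_0$ from \cite{BGM04}. The only difference is that you spell out the branch-landing and the splitting of $E_g^V$ into $\varphi_0^{u_0}$ plus the $\int g u_0$ term, details the paper leaves implicit.
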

\begin{proof}
By \cite[Lemma 4.4(a)]{BGM04} and \cite[Theorem 1.1]{BGM04} we know that
there exists a $\bar u \in \Ne_0^V$ such that
\begin{displaymath}
E_0^V(\bar u)=\inf\limits_{u\in\Ne_0^V}E_0^V(u)=m_V<m_0.
\end{displaymath}

We set $v=\frac{\bar u}{||\bar u||_V}$, so $\bar u=t^v_0v$. We know that there exists $t^v_1=t^v_1(g)$ such that
$t^v_1v\in \Ne^-_g$ by Proposition \ref{gsmall}. Furthermore,
by Proposition \ref{gsmall} we have that $t^v_1\rightarrow t^v_0$
when $||g||_{L^\frac{2N}{N-2}}\rightarrow0$, and so
\begin{equation}
m_{1,g}\leq E^V_g(t^v_1 v)\rightarrow E^V_0(\bar u)=m_V<m_0\,\text{ for }||g||_{L^\frac{2N}{N+2}}\rightarrow0,
\end{equation}
that concludes the proof.
\end{proof}
\begin{teo}\label{u1g}
For $||g||_{L^\frac{2N}{N-2}}\rightarrow0$ there exist $u_{1,g}\in \Ne_g^-$ a solution of (\ref{PV}). Furthermore,
if $g\geq 0$ the solution $u_{1,g}$ can be chosen positive.
\end{teo}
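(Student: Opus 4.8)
The plan is to obtain $u_{1,g}$ as a minimizer of $E_g^V$ restricted to the branch $\Ne_g^-$, to prove that the level $m_{1,g}=\inf_{u\in\Ne_g^-}E_g^V(u)$ is attained by using the Splitting Lemma (Lemma \ref{splitting}) to exclude any loss of compactness below the first bubbling threshold, and finally to check that a minimizer on $\Ne_g^-$ is automatically a free critical point of $E_g^V$, i.e.\ a solution of (\ref{PV}).

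First I would collect the level estimates that make the argument work. Since $E_g^V>0$ on $\Ne_g^-$ we have $m_{1,g}\geq 0$, and arguing as in Remark \ref{u->0} together with $\inf_{u\in\Ne_g^-}\|u\|_V>0$ one gets $m_{1,g}\geq c>0$ when $\|g\|$ is small; by Lemma \ref{limsup}, $m_{1,g}<m_0-\delta$ and $\limsup_{\|g\|\to 0}m_{1,g}\leq m_V<m_0$; on the other hand, by Theorem \ref{mainNeg+} and Remark \ref{u->0}, $m_g=E_g^V(u_g)\to 0$ as $\|g\|\to 0$ because $\|u_g\|_V\leq M\|g\|$. Hence for $\|g\|$ small enough one secures the key strict inequality $m_{1,g}<m_g+m_0$. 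Now, by the Ekeland variational principle pick a minimizing sequence $\{u_n\}\subset\Ne_g^-$ which is a Palais--Smale sequence for $E_g^V$ restricted to $\Ne_g^-$; since $E_g^V(u_n)\to m_{1,g}>0$ while $E_g^V<0$ on $\Ne_g^+$ and $\Ne_g^-$ is bounded away from $0$, this is in fact a Palais--Smale sequence for $E_g^V$ on $\D$. Applying Lemma \ref{splitting} gives, up to a subsequence, $u_n\rightharpoonup u^0$ with $u^0$ a solution of (\ref{PV}), bubbles $u^1,\dots,u^k$ solving $-\Delta u=f'(u)$, and $m_{1,g}=E_g^V(u^0)+\sum_{j=1}^{k}E_0^0(u^j)$ with $E_0^0(u^j)\geq m_0$. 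If $k\geq 1$: when $u^0\neq 0$ then $u^0\in\Ne_g^V$, so $E_g^V(u^0)\geq m_g$ and $m_{1,g}\geq m_g+m_0$, contradicting the inequality above; when $u^0=0$ then $m_{1,g}\geq m_0>m_0-\delta$, contradicting Lemma \ref{limsup}. Therefore $k=0$, $u_n\to u^0$ strongly in $\D$, and since $\inf_{\Ne_g^-}\|u\|_V>0$ we have $u^0\neq 0$; thus $u^0\in\Ne_g^V$ with $E_g^V(u^0)=m_{1,g}>0$, and because $E_g^V<0$ on $\Ne_g^+$ we conclude $u^0\in\Ne_g^-$.

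Next I would check that $u^0$ actually solves (\ref{PV}). Since (for $\|g\|$ small, by Proposition \ref{gsmall}) $\Ne_g^-$ is relatively open in $\Ne_g^V=\{u\neq 0:J(u):=\langle\nabla E_g^V(u),u\rangle=0\}$, the point $u^0$ is a critical point of $E_g^V$ constrained to $\Ne_g^V$, so $\nabla E_g^V(u^0)=\lambda\nabla J(u^0)$; pairing with $u^0$ and using $u^0\in\Ne_g^V$ gives $0=\lambda\langle\nabla J(u^0),u^0\rangle=\lambda\,E_g''(u^0)[u^0,u^0]$, and since $E_g''(u^0)[u^0,u^0]<0$ on $\Ne_g^-$ we get $\lambda=0$, i.e.\ $\nabla E_g^V(u^0)=0$. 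Set $u_{1,g}:=u^0$. For the positivity assertion, when $g\geq 0$ I would argue as in Proposition \ref{upos}: since $f$ is even and $g\geq 0$, $E_g^V(t|u_{1,g}|)\leq E_g^V(tu_{1,g})$ for all $t\geq 0$, and for $\|g\|$ small the structure of $\varphi_g^{u_{1,g}}$ in Proposition \ref{gsmall} shows that the point of $\R^+u_{1,g}$ lying in $\Ne_g^-$ realizes $\max_{t\geq 0}E_g^V(tu_{1,g})=E_g^V(u_{1,g})=m_{1,g}$; choosing $\tau>0$ with $\tau|u_{1,g}|\in\Ne_g^-$ then gives $E_g^V(\tau|u_{1,g}|)\leq m_{1,g}$, hence equality, so $\tau|u_{1,g}|$ is a nonnegative minimizer, which by the strong maximum principle (using $V\leq 0$, $f'(s)>0$ for $s>0$, $g\geq 0$) can be taken strictly positive.

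The step I expect to be the main obstacle is the compactness analysis: establishing the strict gap $m_{1,g}<m_g+m_0$ uniformly for small $g$ so that Lemma \ref{splitting} forces zero bubbles, while simultaneously ensuring the strong limit neither collapses to $0$ nor slips into $\Ne_g^+$. This is exactly where the sign hypothesis $V\leq 0$, $V<0$ on a set of positive measure, enters, through $m_V<m_0$ and Lemma \ref{limsup}; without that gap a minimizing sequence in $\Ne_g^-$ could split off a translated copy of $\omega$ and no solution on $\Ne_g^-$ would be produced.
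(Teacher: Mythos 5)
Your proposal is correct and follows essentially the same route as the paper: the strict gap $m_{1,g}<m_g+m_0$ obtained from Lemma \ref{limsup} together with $m_g\to 0$ (Remark \ref{u->0}), the Splitting Lemma to rule out bubbling and get a minimizer on $\Ne_g^-$, and the reflection argument with $\bar t\,|u_{1,g}|\in\Ne_g^-$ for the sign statement. Your added details (the case analysis $u^0=0$ versus $u^0\neq 0$, the Lagrange multiplier check that the constrained minimizer is a free critical point, and strict positivity via the maximum principle) only make explicit steps the paper leaves implicit.
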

\begin{proof}
By the splitting lemma, to obtain the result it is enough to show that
$m_{1,g}<m_g+m_0$.
In the previous lemma, we have proved that there exists a $\delta>0$ such that
$m_{1,g}<m_0-\delta$ for $||g||_{L^\frac{2N}{N-2}}$ sufficiently small.
By Remark \ref{gsmall} we have also that $m_g\rightarrow0$ when $g\rightarrow 0 $ in $L^\frac{2N}{N+2}$.
So there exists $u_{1,g}\in \Ne_g^V$ a solution of (\ref{PV}). Moreover $E_g^V(u_{1,g})$ is positive, so
$u_{1,g}\in\Ne_V^-$.

To prove the last claim, consider that
$E_g^V(|u_{1,g}|)\leq E_g^V(u_{1,g})$. Also, there exists a $\bar t$ such that $\bar t |u_{1,g}|\in \Ne_g^-$.
Then we have
\begin{equation}
m_{1,g}=E_g^V(u_{1,g})=\max_{t}E_g^V(t u_{1,g})\geq E_g^V(\bar t u_{1,g})\geq E_g^V(\bar t |u_{1,g}|).
\end{equation}
So if $u_{1,g}$ is a solution, also $\bar t |u_{1,g}|\in \Ne_g^-$ is a solution of (\ref{PV}).
\end{proof}
\begin{prop}\label{mv}
If $||g||_{L^{p'}\cap L^{q'}}\rightarrow0$, then $m_{1,g}\rightarrow m_V$.
\end{prop}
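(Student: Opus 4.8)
The plan is to prove the two-sided estimate $\limsup_{\|g\|\to 0} m_{1,g}\le m_V$ and $\liminf_{\|g\|\to 0} m_{1,g}\ge m_V$, where now the smallness is measured in the stronger norm $L^{p'}\cap L^{q'}$ (which, by the embedding $\mathcal D^{1,2}\subset L^p+L^q$ and duality, controls $\int g u$ uniformly on bounded sets of $\mathcal D^{1,2}$; note $L^{2N/(N+2)}$-smallness is already implied, so Theorem \ref{u1g} and Lemma \ref{limsup} apply). The upper bound $\limsup m_{1,g}\le m_V$ is exactly the content of Lemma \ref{limsup}, so nothing new is needed there; the whole work is the lower bound. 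First I would take, for each small $g$, the minimizer $u_{1,g}\in\mathcal N_g^-$ produced by Theorem \ref{u1g}, so that $E_g^V(u_{1,g})=m_{1,g}$, and use the uniform bound on $m_{1,g}$ (it lies below $m_0-\delta$) together with the boundedness lemma for $PS$/bounded-energy sequences in $\mathcal N_g$ to conclude that $\|u_{1,g}\|_V$ is bounded, and also bounded away from $0$ since $\inf_{\mathcal N_g^-}\|u\|>0$ uniformly in $g$ (the constants $B,L$ in Lemma \ref{L} and the Remark after it do not depend on $g$).

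Next, given the bound $\int g u_{1,g}\le \|g\|_{L^{p'}\cap L^{q'}}\,\|u_{1,g}\|_{L^p\cap\,\cdot}\le C\|g\|_{L^{p'}\cap L^{q'}}\|u_{1,g}\|_V\to 0$, I would compare $E_g^V(u_{1,g})$ with $E_0^V(u_{1,g})$: since $E_g^V(w)=E_0^V(w)-\int gw$, we get $E_g^V(u_{1,g})=E_0^V(u_{1,g})+o(1)$ as $\|g\|_{L^{p'}\cap L^{q'}}\to0$, uniformly by the bounds just established. Now $u_{1,g}$ itself need not lie on $\mathcal N_0^V$, so I would project it: let $t_g>0$ be the unique positive number with $t_g u_{1,g}\in\mathcal N_0^V$ (it exists and is unique by the structure results for $\varphi_0$, being the maximum point of $\varphi_0^{u_{1,g}}$). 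Then by definition of $m_V$, $m_V\le E_0^V(t_g u_{1,g})$. The key remaining step is to show $E_0^V(t_g u_{1,g})\le E_0^V(u_{1,g})+o(1)$, equivalently that $t_g\to 1$, or at least that the loss $E_0^V(t_g u_{1,g})-E_0^V(u_{1,g})$ is $o(1)$. For this I would note that $u_{1,g}\in\mathcal N_g^-$ means $1$ is the global max of $\varphi_g^{u_{1,g}}$, so $\varphi_g^{u_{1,g}}{}'(1)=0$, i.e. $\|u_{1,g}\|_V^2-\int f'(u_{1,g})u_{1,g}=\int g u_{1,g}=o(1)$; hence $\varphi_0^{u_{1,g}}{}'(1)=o(1)$. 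Since $\varphi_0^{u_{1,g}}{}'$ is strictly concave with its unique zero at $t_u^0$ (the $\mathcal N_0^V$-projection parameter) and $u_{1,g}$ stays in a compact-in-norm range bounded away from $0$, a quantitative version of this — using that the concavity is uniform on that range (controlled by $\varphi_0'''$, hence by $\int f'''(t u_{1,g})u_{1,g}^3$, bounded via $(f_2)$-type estimates and Lemma \ref{stimef}) — forces $t_u^0=1+o(1)$, hence $t_g=1+o(1)$, and then continuity of $t\mapsto\varphi_0^{u_{1,g}}(t)$, uniform on the relevant range, gives $E_0^V(t_g u_{1,g})=E_0^V(u_{1,g})+o(1)$. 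Chaining the inequalities: $m_V\le E_0^V(t_g u_{1,g})=E_0^V(u_{1,g})+o(1)=E_g^V(u_{1,g})+o(1)=m_{1,g}+o(1)$, so $\liminf_{\|g\|\to0} m_{1,g}\ge m_V$. Combined with Lemma \ref{limsup} this yields $m_{1,g}\to m_V$.

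The main obstacle I expect is making the step $t_g\to 1$ genuinely uniform: one must quantify the strict concavity of $\varphi_0^{u}{}'$ (equivalently, a uniform lower bound on $\varphi_0^{u}{}''(t_u^0)-$type quantities, or an upper bound on $|\varphi_0'''|$) over the family $\{u_{1,g}\}$, which is only known to be bounded in $\mathcal D^{1,2}$ and bounded away from $0$ — not precompact. The clean way around it is to argue by contradiction: if $m_{1,g_n}\not\to m_V$ along some $g_n\to0$, extract $u_{1,g_n}\rightharpoonup u_\infty$ weakly in $\mathcal D^{1,2}$; using $\varphi_0^{u_{1,g_n}}{}'(1)=o(1)$ and the weak-limit arguments of Lemma \ref{splitenergia} and Lemma \ref{gpsi} (the $\int V\psi_n^2\to0$, $\int g\psi_n\to0$ machinery) one identifies $u_\infty$ as a nonzero element of $\mathcal N_0^V$ — here $u_\infty\neq0$ needs the uniform lower bound $\inf_{\mathcal N_g^-}\|u\|>0$ and a splitting-type argument ruling out that all the mass escapes to infinity, which in turn uses $V\le0$ to compare with the problem at infinity as in Lemma \ref{splitting} — and then lower semicontinuity gives $m_V\le E_0^V(u_\infty)\le\liminf E_0^V(u_{1,g_n})=\liminf m_{1,g_n}$, contradicting strict inequality below $m_V$. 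I would present the contradiction version, as it sidesteps the quantitative concavity estimate entirely and reuses the compactness tools already developed in Section 3.
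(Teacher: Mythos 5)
Your first route is essentially the paper's own proof of Proposition \ref{mv}: the paper also takes the exact minimizer $u_n=u_{1,g_n}\in\Ne_{g_n}^-$, shows that $\|u_n\|_{L^p+L^q}$ is bounded above and below (it does this by writing $u_n=t_nv_n$ with $\|v_n\|_{L^p+L^q}=1$ and using (\ref{f1}), (\ref{fmu}) on the Nehari identity, rather than your energy bound on $\|u_n\|_V$ plus the embedding, but the two are interchangeable), deduces $\int g_nu_n\to0$ from the duality $L^p+L^q=(L^{p'}\cap L^{q'})'$ — this is exactly where the stronger norm on $g$ is used — and then projects onto $\Ne_0^V$ via $\tau_nu_n\in\Ne_0^V$, $\tau_n\to1$, so that $E_0^V(\tau_nu_n)\to d\ge m_V$, concluding with Lemma \ref{limsup}. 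The uniformity issue you flag in the step $\tau_n\to1$ is precisely the point the paper leaves with ``details omitted'', so your concern is well placed. Your preferred contradiction/splitting version is a genuinely different way to close that step and is viable, but two phrases need repair. First, the $u_{1,g_n}$ are exact critical points of $E_{g_n}^V$, hence $\langle\nabla E_0^V(u_{1,g_n}),\varphi\rangle=\int g_n\varphi\to0$ uniformly on $\|\varphi\|_{\D}\le1$ and $E_0^V(u_{1,g_n})=m_{1,g_n}+\int g_nu_{1,g_n}\to d\le m_V$; so they form a PS sequence for the \emph{homogeneous} functional, and you should apply the splitting result with $g=0$ (the paper's Lemma \ref{splitting} is stated for a fixed $g$, so this bridging remark is needed before ``reusing Section 3''). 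Second, ``lower semicontinuity'' of $E_0^V$ is not available, since $-\int f(u)$ is not weakly lower semicontinuous; the inequality $E_0^V(u_\infty)\le\liminf E_0^V(u_{1,g_n})$ must instead come from the splitting energy identity $d=E_0^V(u^0)+\sum_jE_0^0(u^j)$ with each bubble energy $\ge m_0>0$ (or from $E_0^V(u_n)=\int[\tfrac12f'(u_n)u_n-f(u_n)]+o(1)$ and Fatou, the integrand being nonnegative by (\ref{fmu})). Once this is used, you do not need to rule out that all mass escapes: if $u^0=0$ then $k\ge1$ because $\|u_{1,g_n}\|$ is bounded away from zero, whence $d\ge m_0>m_V\ge d$, a contradiction; if $u^0\ne0$ then $u^0\in\Ne_0^V$ and $d\ge E_0^V(u^0)\ge m_V$. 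The hypothesis $V\le0$, $V<0$ on a set of positive measure enters only through $m_V<m_0$ (via \cite{BGM04}), as in the paper. In summary: your direct route coincides with the paper's (including its gap), and your splitting route, with the two corrections above, actually supplies a complete proof of the step the paper omits, at the cost of invoking the compactness machinery.
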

\begin{proof}
We take a sequence of $g_n\rightarrow 0$ in $L^{p'}\cap L^{q'}$. We know that for any $g_n$ there exists
$u_{1,{g_n}}$ such that $E_{g_n}^V(u_{1,{g_n}})=m_{1,{g_n}}$. For simplicity we call $u_n=u_{1,{g_n}}$.
Also, we set $v_n=\frac{u_n}{||u_n||_{L^{p}+ L^{q}}}$, and $u_n=t_nv_n$.
We have
\begin{equation}
E_{g_n}^V(u_n)=t_n\left[
\frac 12 \int f'(t_nv_n)v_n-\int \frac{f(t_n v_n)}{t_n}-\frac 12 \int g_nv_n,
\right]
\end{equation}
and we have that there exist a $\delta>0$ such that $0\leq E_{g_n}^V(u_n)\leq m_v+\delta$.
Now, suppose, by contradiction, that $t_n\rightarrow\infty$.
Then,
\begin{equation}
\frac 12 \int f'(t_nv_n)v_n-\int \frac{f(t_n v_n)}{t_n}-\frac 12 \int g_nv_n\rightarrow 0,
\end{equation}
and so
\begin{equation}\label{ftnvn}
\frac 12 \int f'(t_nv_n)v_n-\int \frac{f(t_n v_n)}{t_n}\rightarrow 0.
\end{equation}
By (\ref{fmu}), we have that
\begin{eqnarray*}
\int f'(t_nv_n)v_n-2\int \frac{f(t_n v_n)}{t_n}&=&
\int f'(t_nv_n)v_n-\mu_1\int \frac{f(t_n v_n)}{t_n}+\\
&&+(\mu_1-2)\int \frac{f(t_n v_n)}{t_n}\geq\\
&\geq&(\mu_1-2)\int \frac{f(t_n v_n)}{t_n}.
\end{eqnarray*}
So $\displaystyle\int \frac{f(t_n v_n)}{t_n}\rightarrow0$.
Now the hypothesis on $f$
\begin{equation}
0\leq c_0 t_n^{p-1}\left[\int\limits_{|v_n>1|}|v_n|^p+\int\limits_{|v_n<1|}|v_n|^q \right]\leq
\int \frac{f(t_n v_n)}{t_n}\rightarrow0,
\end{equation}
so we have that both
$\displaystyle \int \limits_{|v_n|>1}|v_n|^p$ and
$\displaystyle \int \limits_{|v_n|<1}|v_n|^q$ vanish when $n\rightarrow \infty$, and so
\begin{equation}
1=||v_n||_{L^p+L^q}\leq\max\left\{\int \limits_{|v_n|>1}|v_n|^p,\int \limits_{|v_n|<1}|v_n|^q\right\}\rightarrow0
\end{equation}
that is a contradiction.
Furthermore, by Proposition \ref{gsmall}, we have $t_n$ bounded away from 0.
So, we have that there exists two positive constants $c_1$ and $c_2$ such that
\begin{equation}
0<c_1\leq t_n=||u_n||_{L^p+L^q}\leq c_2<\infty.
\end{equation}

Now, let $\tau_n$ such that $\tau_n u_n\in \Ne_0^V$. We can show that $\tau_n\rightarrow1$ when
$n\rightarrow\infty$.
The main idea is that
\begin{equation}
\frac{d}{dt}\varphi_{g_n}^{u_n}(\tau_n)-\frac{d}{dt}\varphi_{0}^{u_n}(\tau_n)=\int g_nu_n\rightarrow0
\end{equation}
because $||u_n||_{L^p+L^q}$ is bounded and $g_n\rightarrow 0$ in $L^{p'}\cap L^{q'}$.
The details are omitted for the sake of simplicity.

Now we have that
\begin{equation}\label{taunun}
E_0^V(\tau_nu_n)-E_0^V(u_n)\rightarrow 0.
\end{equation}
We have that $E_{g_n}^V(u_n)$ is bounded, so, up to subsequences, there exists a $d$ such that
$E_{g_n}^V(u_n)\rightarrow d$ when $n\rightarrow\infty$, and, because $u_n$ is bounded in $L^p+L^q$, also
$E_0^V(u_n)\rightarrow d$, and, by (\ref{taunun}), $E_0^V(\tau_n u_n)\rightarrow d$.

So, $d\geq m_V$. By Lemma \ref{limsup} we know also that $d\leq m_V$ so we get the claim.
\end{proof}

\begin{proof}[Proof of Theorem 1]
By theorems \ref{mainNeg+} and \ref{u1g}, we have that there exists a $u_g\in \Ne_g^+$ and $u_{1,g}\in \Ne_g^-$
that solve (\ref{PV}). Furthermore, by Theorem \ref{u1g} and Proposition \ref{upos} the solution can be chosen
nonnegative. At least, by Remark \ref{u->0} we have that $u_g\rightarrow0$ in $\D$ and by Proposition \ref{mv}
that $m_{1,g}\rightarrow m_V$ when $g\rightarrow0$.
\end{proof}

\appendix

\section{The Hypothesis on $f$}

We want to prove that there exists a function that satisfies all the conditions
required in the introduction.

We take the function
\begin{equation}
f(s)=\frac{|s|^{q}}{1+|s|^{q-p}}.
\end{equation}
This function is even, and it satisfies (\ref{f1}).

We have that, for $s>0$
\begin{equation*}
f^{\prime }(s)=\frac{qs^{q-1}+ps^{2q-p-1}}{(1+s^{q-p})^{2}},
\end{equation*}

\begin{displaymath}
f^{\prime \prime }(s)=\frac{s^{q-2}}{(1+s^{q-p})^{2}}\left\{
q(q-1)+p(2q-p-1)s^{q-p}-\frac{2(q-p)(q+ps^{q-p})s^{q-p}}{1+s^{q-p}}\right\}.
\end{displaymath}
It's easy to see that $f$ satisfies (\ref{f2}) and the first part of (\ref{fmu}).

We set $\mu _{2}=1+\varepsilon >1$; then the inequality
$(1+\varepsilon )f^{\prime }(s)s<f^{\prime \prime }(s)s^{2}$ becomes
\begin{displaymath}
(q^{2}-2q-\varepsilon q)+p(2q-p-2-\varepsilon )\gamma -
\frac{2(q-p)(q+p\gamma )\gamma }{1+\gamma }>0,
\end{displaymath}
where $\gamma =s^{q-p}$. So, we have to prove that

\begin{displaymath}
q(q-2-\varepsilon )+[p(2q-p-2-\varepsilon )+q(2p-q-2-\varepsilon )]\gamma
+p(p-2+\varepsilon )\gamma ^{2}>0.
\end{displaymath}

Obviously we can choose $\varepsilon $ such that $q(q-2-\varepsilon )>0$ and
$p(p-2+\varepsilon )>0$. Furthermore, we choose $q-p$ sufficiently small
such that also $2q-p-2-\varepsilon$ and $2p-q-2-\varepsilon$ are positive, so
the second part of
(\ref{fmu}) is proved.

At last we prove (\ref{f3}) and that $f'''(s)s^3>0$. We have that, for $s>0$,
\begin{eqnarray*}
f^{\prime \prime \prime }(s) &=&\frac{6(p-q)^{3}s^{4q-3p-3}}{(1+s^{q-p})^{4}}
-\frac{6(1+p-2q)(p-q)^{2}s^{3q-2p-3}}{(1+s^{q-p})^{3}}+ \\
&+&\frac{(2p+3p^{2}+p^{3}-2q-12pq-6p^{2}q+9q^{2}+12pq^{2}-7q^{3})s^{2q-p-3}}
{(1+s^{q-p})^{2}}+ \\
&+&\frac{q(2-3q+q^{2})s^{q-3}}{1+s^{q-p}}.
\end{eqnarray*}
We obtain that
\begin{displaymath}
f^{\prime \prime \prime }(s)s^{3}=\frac{As^{q}}{1+s^{q-p}}+
\frac{Bs^{2q-p}}{(1+s^{q-p})^{2}}+\frac{Cs^{3q-2p}}{(1+s^{q-p})^{3}}+
\frac{Ds^{4q-3p}}{(1+s^{q-p})^{4}},
\end{displaymath}
were
\begin{equation*}
\begin{array}{ll}
A =q(q-2)(q-1);&
B =(p-q)(2+3p+p^{2}-9q-5pq+7q^{2}); \\
C =6(p-q)^{2}(2q-p-1);&
D =6(p-q)^{3}.
\end{array}
\end{equation*}

We can choose $q-p$ sufficiently small,
in order to have $B,C,D<<A$. Now, set as above $\gamma =s^{q-p}$, we have
\begin{displaymath}
f'''(s)s^3=\frac{s^{q}
\left[
A+(3A+B)\gamma +(3A+2B+C)\gamma ^{2}+(A+B+C+D)\gamma ^{3}
\right]
}{(1+s^{q-p})^{4}}
\end{displaymath}
that is positive for all $s>0$. So (\ref{fmu}) is completely proved.

Furthermore, we have that
\begin{equation}
\lim\limits_{s\rightarrow0^+}\frac{f'''(s)}{s^{q-3}}=A=q(q-1)(q-2)>0,
\end{equation}
and
\begin{equation}
\lim\limits_{s\rightarrow+\infty}\frac{f'''(s)}{s^{p-3}}=A+B+C+D=p(p-1)(p-2)>0.
\end{equation}
So, there exists a $c_3>0$ such that
\begin{equation}
\left\{
\begin{array}{ll}
|f'''(s)|\leq c_{3}|s|^{p-3} & \text{ for }|s|\geq 1; \\
|f'''(s)|\leq c_{3}|s|^{q-3} & \text{ for }|s|\leq 1.
\end{array}
\right.
\end{equation}
Now, let $\Gamma=\{x\in\R^N\ :\ |u(x)|>1\}$ and $\Delta=\R^N\smallsetminus  \Gamma$
We have that
\begin{eqnarray*}
\int f'''(u)u^3&\leq &\int_\Gamma f'''(u)u^3+\int_\Delta f'''(u)u^3
\leq c_3\int_\Gamma|u|^p+c_3\int_\Delta|u|^q\leq\\
&\leq&C_1+C_2||u||_{L^p+L^q}\leq C_3+C_4||u||_{\D}<\infty,
\end{eqnarray*}
and this proves (\ref{f3}).

\nocite{Ne60}
\nocite{St84}
\nocite{BL83a}
\nocite{BL83b}
\nocite{BR05}
\nocite{AP}
\nocite{BN89}

\end{document}